\newcommand{\ut}{\underline{t}}
\newcommand{\us}{\underline{s}}
\newcommand{\uta}{\underline{\tau}}
\newcommand{\ux}{\underline{x}}
\newtheorem{theorem}{Theorem}[section]
\newtheorem{definition}[theorem]{Definition}
\newtheorem{lemma}[theorem]{Lemma}
\newtheorem{proposition}[theorem]{Proposition}
\newtheorem{corollary}[theorem]{Corollary}
\newtheorem{remark}[theorem]{Remark}
\newtheorem{example}[theorem]{Example}
\newcommand{\rr}{{\mathbb{R}}}
\newcommand{\unx}{\underline{x}}
\newcommand{\unz}{\underline{z}}
\newcommand{\unu}{\underline{u}}
\newcommand{\utau}{\underline{\tau}}
\title{\bf On the Bargmann-Radon transform in the monogenic setting}
\author{
Fabrizio Colombo\\
Politecnico di Milano\\
Dipartimento di Matematica\\
Via Bonardi, 9\\
20133 Milano, Italy\\
fabrizio.colombo@polimi.it\and
Irene Sabadini\\
Politecnico di Milano\\
Dipartimento di Matematica\\
Via Bonardi, 9\\
20133 Milano, Italy\\
irene.sabadini@polimi.it\and
Franciscus Sommen\\
Clifford Research Group\\ Faculty of Sciences\\ Ghent University\\
Galglaan 2, 9000 Gent, Belgium\\ Franciscus.Sommen@UGent.be
}
\date{  }
\begin{document}
\maketitle
\begin{abstract}
In this paper we introduce and study a Bargmann-Radon transform on the real monogenic Bargmann module. This transform is defined as the projection of the real Bargmann module on the closed submodule of monogenic functions spanned by the monogenic plane waves. We prove that this projection can be written in integral form in terms the so-called Bargmann-Radon kernel. Moreover, we have a characterization formula for the Bargmann-Radon transform of a function in the real Bargmann module in terms of its complex extension and then its restriction to the nullcone in $\mathbb C^m$. We also show that the formula holds for the Szeg\H{o}-Radon transform that we introduced in \cite{css}. Finally, we define the dual transform and we provide an inversion formula.
\end{abstract}
{\bf Key words}: Monogenic functions,
Bargmann modules, Bargmann-Radon transform.

\noindent {\bf Mathematical Review Classification numbers}: 30G35, 44A12, 30H20.

\section{Introduction}
In the paper \cite{ppss} we considered an extension of
the Segal-Bargmann transform, a unitary map from spaces of square-integrable functions to spaces of square-integrable holomorphic functions (see \cite{Ba}, \cite{neretin}, \cite{Se1}, \cite{Se2}, \cite{zhu}).
Specifically, we studied the higher dimensional extension based on monogenic functions with values in a Clifford algebra. This approach has been used, e.g., in \cite{kmnq}, \cite{mnq} to study quantum systems with internal, discrete degrees of freedom corresponding to nonzero spins.

In \cite{ppss} we introduced a notion of Segal-Bargmann module (over the Clifford algebra) which is the set of entire functions, square integrable with respect to the Gaussian density and that are in the kernel of the Dirac operator. We also defined the Segal-Bargmann-Fock transform in this framework. The fact that monogenic functions admit a Fischer decomposition allows to prove a relation between the projection of the transform onto its monogenic part and the Fourier-Borel kernel. It is also worthwhile to mention that this kernel, unlike what happens for Hardy or Bargman spaces, is an exponential not a rational function.

In \cite{css} we defined the so-called Szeg\H{o}-Radon projection which may be abstractly defined as the orthogonal projection of a suitable Hilbert module of square integrable left monogenic functions onto the closed submodule of monogenic functions spanned by the monogenic plane waves $\langle\underline{x}, \utau\rangle^k\, \utau$, where $\utau =\underline{t}+i\underline{s}$, $\underline{t}$, $\underline{s}$ are orthogonal unit 1-vectors.  This transformation does not exactly correspond to the Radon transform. However it is a canonical map from $m$-dimensional monogenic functions to $2$-dimensional monogenic functions, like in the  case of the Clifford-Radon transform, see \cite{bls}, \cite{So1}. The Clifford-Radon transform and, more in general, the Radon transform are important tools with several applications for example in tomography.

In this paper we combine the approaches in \cite{css} and \cite{ppss}. We introduce and study a Bargmann-Radon transform on the real monogenic Bargmann module. Similarly to what we have done in \cite{css} in the Szeg\H{o}-Radon case, it is defined as the projection of the real Bargmann module on the closed submodule of monogenic functions spanned by the monogenic plane waves $\langle\underline{x}, \utau\rangle^k\, \utau$, where $\utau =\underline{t}+i\underline{s}$, $\underline{t}$, $\underline{s}$ are orthogonal unit 1-vectors. We show that this projection can be written in integral form in terms the so-called Bargmann-Radon kernel. A main result that we prove is a characterization formula for the Bargmann-Radon transform of a function in the real Bargmann module in terms of its complex extension and its restriction to the nullcone in $\mathbb C^m$. We also show that the same formula holds for the Szeg\H{o}-Radon transform treated in \cite{css}.
Finally, we study the dual Bargmann-Radon and as a by-product we obtain a formula, in integral form, to express the monogenic part of a holomorphic function belonging to the Bargmann module in several complex variables.
\\
The plan of the paper is the following. After the Introduction, Section 2 contains the notations and some preliminary results. In section 3 we introduce the real monogenic Bargmann module $\mathscr{BM}(\mathbb R^m)$ and we recall the definition of Segal-Bargmann-Fock space. We then define the Bargmann and the Bargmann-Radon transforms on $\mathscr{BM}(\mathbb R^m)$. We introduce the Bargmann-Radon kernel and we use to write the Bargmann-Radon transform in integral form. We conclude the section with a characterization formula. In Section 4, we recall the Szeg\H{o}-Radon transform, its associated kernel, and we show that the characterization formula holds also in this case. Finally, Section 5 contains the definition of dual transform and the inversion formula. These are similar to the analogue concepts in the Szeg\H{o}-Radon case treated in \cite{css}. We also obtain a  formula to write in integral form the monogenic part of a holomorphic function in several complex variables and, an an example,  we use it to express the Fourier-Borel and the Szeg\H{o} kernels.
\section{Notations and preliminary results}
In this section we collect some preliminary results and notations used in the rest of the paper.
For more information on the material in this section, we refer the reader to \cite{bds}, \cite{dss}.\\
By $\rr_m$ we denote the real Clifford algebra over $m$ imaginary units
$\underline{e}_1,\dots,\underline{e}_m$ which satisfy the relations $\underline{e}_i\underline{e}_j+\underline{e}_j\underline{e}_i=-2\delta_{ij}$.
An element $x$ in the
Clifford algebra is denoted by $x=\sum_A \underline{e}_Ax_A$ where $x_A\in\rr$,
$A=i_1\ldots i_r$, $i_\ell\in \{1,2,\ldots, n\}$, $i_1<\ldots <i_r$ is a multi--index,
$\underline{e}_A=\underline{e}_{i_1} \underline{e}_{i_2}\ldots \underline{e}_{i_r}$ and $\underline{e}_{\emptyset} =1$.
Similarly, we denote by $\mathbb C_m$ we denote the complex Clifford algebra over $m$ imaginary units
$\underline{e}_1,\dots,\underline{e}_m$

The so called 1-vectors are elements in $\mathbb{R}_m$ which are linear combinations with real coefficients of the elements $\underline{e}_i$, $i=1,\ldots, m$. The sets of 1-vectors is denoted by $\mathbb R_m^{(1)}$. The map from $\mathbb R^m$ to $\mathbb R_m^{(1)}$ is given by $(x_1,x_2,\ldots,x_m)\mapsto \underline{x}=
x_1\underline{e}_1+\ldots+x_m\underline{e}_m$ and it is obviously one-to-one. The norm of a 1-vector is defined as $|\unx|=(x_1^2+\cdots +x_n^2)^{1/2}$ and the scalar product of $\unx$ and $\underline{y}=y_1\underline{e}_1+\cdots +y_m\underline{e}_m$ is
$$\langle\unx,\underline{y}\rangle=x_1y_1+\cdots +x_my_m.$$
\\
In $\mathbb C_m$ there are automorphisms which leave the multivector structure invariant. In this paper we will use the so-called Hermitian conjugation
$$
(\lambda \mu)^\dagger = \mu^\dagger \lambda^\dagger,  \quad (\mu_A \underline{e}_A)^\dagger = \mu_A^c \underline{e}_A^\dagger, \quad \underline{e}_j^\dagger = - \underline{e}_j, \ \ \ \  j=1,\ldots,n,
$$
where $\mu_A^c$ stands for the complex conjugate of the complex number $\mu_A$.

In the sequel, we will denote by $B(0,1)$ the unit ball with center at the origin in $\mathbb R^m$
while the symbol $\mathbb{S}^{m-1}$ will denote its boundary, that is the sphere of unit 1-vectors in $\mathbb{R}^m$:
$$
\mathbb{S}^{m-1}=\{ \underline{x}=\underline{e}_1x_1+\ldots +\underline{e}_mx_m\ :\  x_1^2+\ldots +x_m^2=1\},
$$
whose area, denoted by $A_m$ is given by
$$
A_{m}=\frac{2 \pi^{m/2}}{\Gamma(\frac{m}{2})}.
$$

\begin{definition}
A function $f:\Omega\subseteq\mathbb R^m\to\mathbb C_m$ defined and continuously differentiable in the open set $\Omega$ is said to be (left) monogenic if it satisfies
$$
\partial_{\unx}f(\unx)=\sum_{j=1}^m \underline{e}_i\partial_{x_j} f(\ux)=0.
$$
If $f:\Omega\subseteq\mathbb C^m\to\mathbb C_m$ is as above, we say that $f$ is (left) monogenic in $\Omega$ if it is holomorphic and in the kernel of the complexified Dirac operator $\sum_{j=1}^m \underline{e}_i\partial_{z_j}$.
We denote by $\mathscr M(\Omega)$ the right $\mathbb C_m$-module of (left) monogenic functions in $\Omega$.
\end{definition}

A classical tool in Clifford analysis is the so-called Fischer decomposition. It provides a unique decomposition of an arbitrary homogeneous polynomial in $\mathbb R^m$ as
 $$
 R_k(\unx)=M_k(\unx)+\unx R_{k-1}(\unx),
 $$
 where the subscripts denote the degree of homogeneity of the polynomial and $M_k\in\mathscr{M}(\mathbb R^m)$.
 The monogenic polynomial $M_k$ is called monogenic part of $R_k$ and is denoted by $M(R_k)$.
The Fischer decomposition of the function $\frac{1}{k!}\langle\unx,\underline{u}\rangle^k$ can be written in terms of the so-called  zonal spherical monogenics which are defined by
\begin{equation}\label{zmk}
 Z_{k}(\underline{u},\underline{x})=\frac{\Gamma\left(\frac{m}{2}-1\right)}{2^{k+1}\Gamma\left(\frac{m}{2}+k\right)}
 (|\underline{u}| |\underline{x}|)^k
 \left((k+m-2)C_k^{\frac{m}{2}-1}(t)+(m-2) \frac{\underline{u}\wedge\underline{x}}{|\underline{u}| |\underline{x}|} C_{k-1}^{\frac{m}{2}}(t)\right)
 \end{equation}
where
  $t:=\dfrac{\langle\underline{u},\underline{x}\rangle}{|\underline{u}| |\underline{x}|}$ and
 $C^\lambda_k(t)$ are the Gegenbauer polynomials. Let us define
 $Z_{k,0}(\underline x,\underline u)=Z_{k}(\underline x,\underline u)$
and
\[Z_{k,s}(\underline x,\underline u)=\frac{Z_{k-s,0}(\ux,\underline u)}{\beta_{s,k-s}\ldots \beta_{1,k-s}},\quad k\ge s\]
with $\beta_{2s,k}=-2s$, $\beta_{2s+1,k}=-(2s+2k+m)$. Then we have:
\begin{equation}\label{FischerE}
\frac{1}{k!}\langle\unx,\underline{u}\rangle^k=\sum_{s=0}^k \unx^s Z_{k,s}(\unx,\unu )\unu^s.
\end{equation}
Using \eqref{FischerE}, we obtain the Fischer decomposition of $\exp(\langle \unz, \unx\rangle)$, see \cite{ndss}, namely
\[
\begin{split}
\exp(\langle \unx, \unu\rangle)&=\sum_{k=0}^\infty \frac{\langle \unx, \unu\rangle^k}{k!}\\
&=\sum_{k=0}^\infty\sum_{s=0}^k \unx^s Z_{k,s}(\unx,\unu )\unu^s\\
&=\sum_{s=0}^\infty\unx^s (\sum_{k=s}^\infty  Z_{k,s}(\unx,\unu ))\unu^s\\
&=  E(\unx,\unu)+\sum_{s=1}^\infty \unx^s E_s(\unx,\unu) \unu^s,
\end{split}
\]
where $E_s(\unx,\unu)= \sum_{k=s}^\infty Z_{k,s}(\unx,\unu)$. The function $E(\unx,\unu)$ is the monogenic part of $\exp(\langle \unx, \unu\rangle)$ and it is the Fourier-Borel kernel, see \cite{ndss}, \cite{ss}.
 Note that it is hermitian, namely, $E^\dagger(\unu, \unx)=E(\unx,\unu)$.

\section{The Bargmann-Radon transform}
In this section we introduce and study the Bargmann-Radon transform on the (real) monogenic Bargmann module. In particular, we introduce the Bargmann-Radon kernel and we use it to express the Bargmann-Radon transform in integral form. We also show that this transform gives rise to monogenic functions that can be expressed in an interesting way on the nullcone.\\
We begin by giving the definition of the so-called monogenic Bargmann module (see section 5 in \cite{ppss}):
\begin{definition}
The monogenic Bargmann module $\mathscr{MB} (\rr^m)$ consists of the functions $f\in\mathscr M(\rr^m)$ such that
$$
f(\unx)e^{-|\unx|^2/4}\in \mathscr{L}^2(\rr^m),
$$
and equipped with the inner product
$$
\langle f,g\rangle_{\mathscr{MB} }=\frac{1}{(2\pi)^{m/2}}\int_{\rr^m}  e^{-|\unx|^2/2} f^\dagger(\unx)g(\unx)\, d\unx .
$$
\end{definition}
Note that an analogous definition has been given in \cite{ppss}, section 4, for functions in the kernel of the complexified Dirac operator (or its powers). More precisely, we have
\begin{definition}
The Segal-Bargmann-Fock space $\mathscr B(\mathbb C^m)$ is the Hilbert space of entire functions in $\mathbb C^m$ which are square-integrable with respect to the $2m$-dimensional Gaussian density, i.e.
\[
\frac{1}{\pi^m}\int_{\mathbb C^m}\exp\left(-|\unz|^2\right) |f(\unz)|^2 \, d\unx d\underline{y}<\infty, \qquad \unz=\unx+i\underline{y}
\]
and equipped with the inner product
\[
\langle f,g\rangle_{\mathscr{B}} =\frac{1}{\pi^m}\int_{\mathbb C^m} \exp\left(-|\unz|^2\right){f^\dagger(\unz)}g(\unz)\, d\unx d\underline{y}.
\]
The monogenic Bargmann module $\mathscr{MB}(\mathbb C^m)$ is defined as
\[
\mathscr{MB}(\mathbb C^m)=\mathscr M (\mathbb C^m)\cap \mathscr B(\mathbb C^m),
\]
and it is is equipped with the inner product defined in $\mathscr B(\mathbb C^m)$.
\end{definition}
\begin{definition}
We define the Bargmann transform of $f\in\mathscr{MB} (\rr^m)$ as
\[
\mathcal B[f(\unx)e^{-|\unx|^2/4}]= (2\pi)^{-m/2}\int_{\rr^m} \exp( -\frac 12\langle \unz, \unz\rangle
+\langle \unx, \unz\rangle-\frac 14 |\unx|^2) f(\unx)e^{-|\unx|^2/4}\, d\unx .
\]
\end{definition}
We note that
\[
\begin{split}
\mathcal B[f(\unx)e^{-|\unx|^2/4}]&= (2\pi)^{-m/2}\int_{\rr^m} \exp( -\frac 12\langle \unz, \unz\rangle
+\langle \unx, \unz\rangle-\frac 14 |\unx|^2) f(\unx)e^{-|\unx|^2/4}\, d\unx\\
&=f(\unz)\\
&= (2\pi)^{-m/2}\int_{\rr^m} E(\unz,\unx) f(\unx) e^{-|\unx|^2/2}\, d\unx,
\end{split}
\]
where $E(\unz,\unx)$ is the monogenic part of $\exp(\langle \unz, \unx\rangle)$, see Section 2.

Let us denote by $[\cdot, \cdot]$  the Fischer inner product in $\mathscr M(\rr^m)$:
$$
[R,S]=R(\partial_{\unx})^\dagger S(\unx)_{|\unx = 0}.
$$
From this definition, we immediately obtain the formula
\begin{equation}\label{reproducingE}
\begin{split}
f(\unu)&=  (2\pi)^{-m/2}\int_{\rr^m} e^{-|\unx|^2/2} E(\unu,\unx ) f(\unx)\, d\unx \\
&= [E^\dagger (\unu,\unx), f(\unx)].
\end{split}
\end{equation}
By taking the holomorphic extensions of $f$, $g$ to $\mathbb C^m$ and using the fact that
$\mathscr{MB}(\mathbb R^m)$ equipped with the Fischer inner product and $\mathscr{MB}(\mathbb C^m)$ are isometric (see \cite{ppss}) we obtain
$$
\langle f ,g \rangle_{\mathscr{MB}} =\frac{1}{\pi^m}\int_{\mathbb C^m} e^{-|\unz|^2} f(\unz)^\dagger g(\unz)\, d\unz .
$$
Moreover, by the definition of Fischer product extended to functions defined over $\mathbb C^m$, we deduce
$$
f(\unu)= \frac{1}{\pi^m}\int_{\mathbb C^m} e^{-|\unz|^2} E(\unz,\unu)^\dagger f(\unz)\, d\unz .
$$
We now consider the following submodules of the module $\mathscr{MB}(\rr^m)$.
\begin{definition}\label{defsubmod}
For any given $\underline{\tau}= \underline{t}+i \underline{s}$, $\underline{t}$, $\underline{s}\in\rr^m$, where $|\underline{t}|=|\underline{s}|=1$, $\underline{t}\perp \underline{s}$, the closure of the right $\mathbb C_m$-module consisting of all finite linear combinations
$$
\sum_{\ell\in\mathbb N} \langle \unx, \utau\rangle^\ell \utau
$$
is denoted by $\mathscr{MB}(\utau)$.
\end{definition}
The following result from \cite{css} is useful for the computations in the sequel:
\begin{proposition}\label{proptt}
Let $\ut,\us\in\mathbb R^n$ be such that $|\ut|=|\us|=1$ and $\langle \ut,\us\rangle=0$ and let $\uta=\ut+i\us\in\mathbb C^m$.
Then
\begin{enumerate}
\item
$\uta\, \uta^\dagger\uta=4\uta$,
\item
 $\underline{\tau}^2=0$,
 \item
 $\underline{\tau}^\dagger \, \underline{\tau}+\underline{\tau}\,\underline{\tau}^\dagger=4$.
\end{enumerate}
\end{proposition}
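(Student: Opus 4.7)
The plan is to prove the three identities in the order (2), (3), (1), with (1) following as a quick consequence of the first two. Indeed, multiplying identity (3) on the right by $\uta$ gives $\uta\uta^\dagger\uta + \uta^\dagger\uta^2 = 4\uta$; once (2) tells us $\uta^2 = 0$, the second term drops out and (1) is immediate. So the actual work reduces to establishing (2) and (3) by direct Clifford-algebraic manipulation, using only $\underline{e}_i\underline{e}_j+\underline{e}_j\underline{e}_i=-2\delta_{ij}$ and the Hermitian conjugation rules recalled in Section 2.

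For item (2), I would expand
$$
\uta^2 = (\ut+i\us)^2 = \ut^2 + i(\ut\us+\us\ut) - \us^2,
$$
and then apply the two basic facts that $\ut^2 = -|\ut|^2 = -1$ (and similarly $\us^2 = -1$) together with $\ut\us+\us\ut = -2\langle\ut,\us\rangle = 0$ by orthogonality. This immediately gives $\uta^2 = -1+0-(-1)=0$.

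For item (3), the first task is to compute $\uta^\dagger$. Since $\underline{e}_j^\dagger = -\underline{e}_j$ and real coefficients equal their own complex conjugates, we have $\ut^\dagger=-\ut$ and $\us^\dagger=-\us$; on the other hand $(i\us)^\dagger = (-i)\,\us^\dagger = i\us$, because $\dagger$ is complex-antilinear. Hence $\uta^\dagger = -\ut+i\us$. A direct expansion using only $\ut^2=\us^2=-1$ then yields
$$
\uta^\dagger\uta = 2 - i(\ut\us-\us\ut), \qquad \uta\uta^\dagger = 2 + i(\ut\us-\us\ut);
$$
summing, the bivector parts cancel and we obtain $\uta^\dagger\uta+\uta\uta^\dagger=4$, which is (3).

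The only real subtlety is bookkeeping with the sign conventions in the Hermitian conjugation; in particular the fact that complex-antilinearity combined with $\underline{e}_j^\dagger = -\underline{e}_j$ produces the somewhat counterintuitive $(i\us)^\dagger = +i\us$ (not $-i\us$). Once this is accounted for correctly, the rest is a routine expansion, and the derivation of (1) from (2) and (3) is a one-line observation rather than a separate calculation.
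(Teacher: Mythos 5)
Your proof is correct: the computation of $\uta^\dagger=-\ut+i\us$, the expansions giving $\uta^2=0$ and $\uta^\dagger\uta+\uta\uta^\dagger=4$, and the derivation of (1) by right-multiplying (3) by $\uta$ and killing the $\uta^2$ term all check out. The paper itself quotes this proposition from \cite{css} without proof, and your argument is exactly the standard direct Clifford-algebraic verification one would expect there.
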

We note that since $\utau^2=0$ then $\utau$ is an element in the nullcone in $\mathbb C^m$.

\begin{definition}
We define the Bargmann-Radon transform of $f\in \mathscr{MB}(\rr^m)$ as
$$
\mathcal R_{\utau}[f]={\rm Proj}_{\mathscr{MB}(\utau)} f
$$
where ${\rm Proj}_{\mathscr{MB}(\utau)}$ denotes the projection on $\mathscr{MB}(\utau)$.
\end{definition}
The Bargmann--Radon kernel is of the same form as the Szeg\H{o}-Radon kernel introduced in \cite{css} but with different coefficients:
$$
B_{\utau}(\unu,\unx)= \sum_{\ell=0}^\infty \lambda_\ell \langle \unu, \utau\rangle^\ell \utau\, \utau^\dagger \langle \unx, \utau^\dagger\rangle^\ell .
$$
\begin{remark}\label{coeff}{\rm
The calculations of the coefficients $\lambda_\ell$ follows from the fact that the integral
$$
\frac{1}{(2\pi )^{m/2}} \int_{\rr^m} e^{-|\unx|^2/2} \utau^\dagger\utau \langle \unx, \utau^\dagger\rangle^\ell \langle \unx, \utau\rangle^{\ell'}\, d\unx
$$
is zero when $\ell\not=\ell'$, because of the definition of the Fischer inner product. If $\ell=\ell'$ the integral equals
$$
\frac{(-1)^\ell}{(2\pi)^{m/2}} \utau^\dagger \utau \int_{\rr^m} e^{-|\unx|^2/2} | \langle \unx, \utau \rangle|^{2\ell}\, d\unx .
$$
This last integral does not depend on $\utau$ so we can compute it for a specific choice of $\utau$, for example $\utau= e_1+ie_2$. We have, by setting $\underline{y}=(x_3,\ldots , x_m)$,
\[
\begin{split}
\int_{\rr^m} e^{-|\unx|^2/2}  (x_1^2+x_2^2)^\ell d\unx&= \int_{\rr^{m-2}} e^{-|\underline{y}|^2/2} d\underline{y} \int_{0}^{+\infty}\int_{0}^{2\pi} e^{-r^2/2}r^{2\ell} r\, dr d\theta\\
&=  (2\pi)^{m/2}\int_{0}^{+\infty} 2^\ell e^{-s} s^{\ell} \, ds\\
&= (2\pi)^{m/2}2^{\ell} \ell !.
\end{split}
\]
}
\end{remark}
\begin{lemma}
The formula
\[
\frac{1}{(2\pi )^{m/2}} \int_{\rr^m} e^{-|\unx|^2/2} B_{\utau}(\unu,\unx) \utau  \langle \unx, \utau \rangle^\ell d\unx
= \utau \langle \unu, \utau \rangle^\ell
\]
holds if and only if
$$
\lambda_\ell =\frac{(-1)^\ell}{\ell!4\cdot 2^\ell}.
$$
Moreover
$$
B_{\utau}(\unu,\unx)= \sum_{\ell=0}^\infty \frac{(-1)^\ell}{\ell!4\cdot 2^\ell} \langle \unu, \utau\rangle^\ell \utau\, \utau^\dagger \langle \unx, \utau^\dagger\rangle^\ell =\frac{\utau\, \utau^\dagger}{4} \exp(-\frac 12 \langle \unu, \utau \rangle\langle \unx, \utau^\dagger \rangle).
$$
\end{lemma}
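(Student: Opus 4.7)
The plan is a direct verification: substitute the series expression for $B_{\utau}(\unu,\unx)$ into the left-hand side, interchange sum and integral, and show that only one term survives.

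First I would observe that each factor $\langle\unx,\utau\rangle$, $\langle\unx,\utau^\dagger\rangle$, $\langle\unu,\utau\rangle$ is a complex scalar and hence commutes with every Clifford element. Substituting the definition of $B_{\utau}$ and interchanging, the left-hand side becomes
\[
\sum_{k=0}^\infty \lambda_k \langle \unu,\utau\rangle^k\, \utau\, \utau^\dagger\, \utau \cdot \frac{1}{(2\pi)^{m/2}}\int_{\rr^m} e^{-|\unx|^2/2}\, \langle\unx,\utau^\dagger\rangle^k \langle\unx,\utau\rangle^\ell\, d\unx.
\]
Now I would invoke Proposition \ref{proptt}, which gives $\utau\,\utau^\dagger\,\utau=4\utau$, and the orthogonality computation recorded in Remark \ref{coeff}: the integral in the display above vanishes for $k\neq \ell$, while for $k=\ell$ it evaluates (after extracting the scalar part of the Remark's computation, since $\utau^\dagger\utau$ no longer appears inside) to $(-1)^\ell\, 2^\ell\, \ell!$.

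Consequently only the $k=\ell$ term survives, and the left-hand side reduces to
\[
4\lambda_\ell\,(-1)^\ell\, 2^\ell\, \ell!\,\langle\unu,\utau\rangle^\ell\,\utau.
\]
Equating this to $\utau\,\langle\unu,\utau\rangle^\ell$ yields the necessary and sufficient condition $\lambda_\ell=(-1)^\ell/(\ell!\,4\cdot 2^\ell)$, which is the stated value. Since distinct values of $\ell$ yield linearly independent constraints, the iff is automatic.

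For the closed form, I would use once more that $\langle\unu,\utau\rangle$ and $\langle\unx,\utau^\dagger\rangle$ are scalars and commute through, so
\[
B_{\utau}(\unu,\unx)=\frac{\utau\,\utau^\dagger}{4}\sum_{\ell=0}^\infty \frac{1}{\ell!}\left(-\tfrac{1}{2}\langle\unu,\utau\rangle\langle\unx,\utau^\dagger\rangle\right)^\ell=\frac{\utau\,\utau^\dagger}{4}\exp\!\left(-\tfrac{1}{2}\langle\unu,\utau\rangle\langle\unx,\utau^\dagger\rangle\right).
\]
The convergence of the series is not a serious issue since the summand is a scalar exponential multiplied by the fixed Clifford element $\utau\utau^\dagger$. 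The only delicate point is the bookkeeping: making sure to move only genuine scalars past Clifford factors, and correctly separating the scalar $2^\ell \ell!$ from the Clifford element $\utau^\dagger\utau$ when lifting the integral identity from Remark \ref{coeff} (where the factor $\utau^\dagger\utau$ sits inside the integrand) to the form needed here.
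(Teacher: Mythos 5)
Your proposal is correct and follows essentially the same route as the paper: expand the kernel, use the orthogonality and Gaussian integral from Remark \ref{coeff} so that only the $k=\ell$ term survives with value $(-1)^\ell 2^\ell \ell!$, apply $\utau\,\utau^\dagger\,\utau=4\utau$ from Proposition \ref{proptt}, and resum the scalar series into the exponential. The paper's proof is just a terser version of the same computation.
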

\begin{proof}
We compute
$$
\frac{1}{(2\pi )^{m/2}} \int_{\rr^m} e^{-|\unx|^2/2} B_{\utau}(\unu,\unx) \utau  \langle \unx, \utau \rangle^\ell d\unx
$$
using Remark \ref{coeff}. We have:
\[
\begin{split}
\frac{1}{(2\pi )^{m/2}} \int_{\rr^m} e^{-|\unx|^2/2} B_{\utau}(\unu,\unx) \utau  \langle \unx, \utau \rangle^\ell d\unx
&=\utau \, \utau^\dagger\utau \lambda_\ell (-1)^\ell \ell ! 2^\ell \langle \unu, \utau \rangle^\ell\\
&= \utau \langle \unu, \utau \rangle^\ell
\end{split}
\]
if and only if
$$
\utau \, \utau^\dagger\utau \lambda_\ell (-1)^\ell \ell ! 2^\ell
= \utau.
$$
By Proposition \ref{proptt} we have that $\utau \, \utau^\dagger\utau=4\utau$  and so we obtain the statement.
\end{proof}
Since $B_{\utau}$ is a reproducing kernel for the generators of $\mathscr{MB}(\utau)$, and since $\utau\,\utau^\dagger$ commutes with $\langle \unu, \utau \rangle$ we immediately have:
\begin{corollary}\label{cor25}
The function
$B_{\utau}(\unu,\unx)$
is a reproducing kernel for the $\mathbb C_m$-module $\mathscr{MB}(\utau)$.
\end{corollary}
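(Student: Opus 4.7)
The plan is to bootstrap the preceding lemma from the individual generators $\utau\langle\unx,\utau\rangle^\ell$ to the whole closed submodule $\mathscr{MB}(\utau)$ by a standard linearity-plus-continuity argument. Concretely, the claim to verify is the identity
$$f(\unu) \;=\; \frac{1}{(2\pi)^{m/2}}\int_{\rr^m} e^{-|\unx|^2/2}\, B_{\utau}(\unu,\unx)\, f(\unx)\, d\unx \qquad \text{for every } f\in\mathscr{MB}(\utau),$$
modeled on the reproducing formula \eqref{reproducingE} for the full module $\mathscr{MB}(\rr^m)$ via the Fourier-Borel kernel $E$.

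First I would check the identity on a typical finite right-linear combination $f_N(\unx)=\sum_{\ell=0}^{N}\langle\unx,\utau\rangle^{\ell}\,\utau\, c_\ell$, with $c_\ell\in\mathbb C_m$, which by Definition \ref{defsubmod} form a dense subset of $\mathscr{MB}(\utau)$. Because $\langle\unx,\utau\rangle\in\mathbb C$ is central, each summand may be rewritten as $\utau\langle\unx,\utau\rangle^{\ell}c_\ell$, so pulling $c_\ell$ out of the integral on the right and invoking the preceding lemma term-by-term yields $\sum_{\ell}\utau\langle\unu,\utau\rangle^{\ell}c_\ell=f_N(\unu)$. The commutation of $\utau\,\utau^\dagger$ with $\langle\unu,\utau\rangle$ highlighted just before the statement enters only to confirm that this repackaging is unambiguous and is automatic since $\langle\unu,\utau\rangle$ is a complex scalar.

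Next I would pass to the limit $N\to\infty$ for a general $f\in\mathscr{MB}(\utau)$, approximated in the Bargmann norm by such $f_N$. Point evaluation at $\unu$ is a bounded functional on $\mathscr{MB}(\rr^m)$ by \eqref{reproducingE}, so $f_N(\unu)\to f(\unu)$. Simultaneously the right-hand side, interpreted as the pairing $f\mapsto\langle B_{\utau}(\unu,\cdot)^{\dagger},f\rangle_{\mathscr{MB}}$, is continuous in $f$ provided $B_{\utau}(\unu,\cdot)\in\mathscr{MB}(\rr^m)$; passing to the limit then yields the reproducing identity on all of $\mathscr{MB}(\utau)$.

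The only step that is not purely formal, and where I expect the main technical content to sit, is verifying the membership $B_{\utau}(\unu,\cdot)\in\mathscr{MB}(\rr^m)$ so that the integral defines a bounded (anti)linear functional. The explicit exponential form $B_{\utau}(\unu,\unx)=\tfrac{\utau\,\utau^\dagger}{4}\exp(-\tfrac{1}{2}\langle\unu,\utau\rangle\langle\unx,\utau^\dagger\rangle)$, together with the third identity of Proposition \ref{proptt} controlling $\utau\,\utau^\dagger$, reduces this to an elementary Gaussian estimate, so I anticipate no genuine obstruction beyond bookkeeping.
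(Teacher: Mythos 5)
Your argument is correct and is essentially the paper's own: the corollary is stated as an immediate consequence of the preceding lemma (reproduction on the generators $\utau\langle\unx,\utau\rangle^{\ell}$) together with right $\mathbb C_m$-linearity and passage to the closure, which is exactly the density-plus-continuity bootstrap you describe. Your extra step checking that $f\mapsto\frac{1}{(2\pi)^{m/2}}\int e^{-|\unx|^2/2}B_{\utau}(\unu,\unx)f(\unx)\,d\unx$ is a bounded functional (via Gaussian integrability of the exponential kernel) is a detail the paper leaves implicit but raises no difficulty.
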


The following result expresses the Bargmann-Radon transform of $f\in \mathscr{MB}(\rr^m)$ in terms of the Bargmann-Radon kernel:
\begin{theorem}\label{Th2.6}
Let $f\in \mathscr{MB}(\rr^m)$.
The following formula holds
$$
\mathcal{R}_{\utau}[f](\unu)=\frac{1}{(2\pi)^{m/2}}  \int_{\rr^m}
 e^{-|\unx|^2/2} B_{\utau}(\unu,\unx) f(\unx) d\unx .
$$
\end{theorem}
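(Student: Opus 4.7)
Let $T[f](\unu):=\frac{1}{(2\pi)^{m/2}}\int_{\rr^m} e^{-|\unx|^2/2} B_{\utau}(\unu,\unx) f(\unx)\, d\unx$ denote the right-hand side of the formula. My plan is to recognise $T$ as the orthogonal projection $\mathcal{R}_{\utau}$ by verifying three properties: (i) $T[f]\in\mathscr{MB}(\utau)$ for every $f\in\mathscr{MB}(\rr^m)$; (ii) $T$ restricted to $\mathscr{MB}(\utau)$ is the identity; and (iii) $T$ is self-adjoint with respect to $\langle\cdot,\cdot\rangle_{\mathscr{MB}}$. The standard orthogonal decomposition argument then yields $T=\mathcal{R}_{\utau}$.

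Property (i) follows from the series expansion of $B_{\utau}$ established just before the theorem: each summand is of the form $\lambda_\ell \langle\unu,\utau\rangle^\ell \utau\cdot(\utau^\dagger\langle\unx,\utau^\dagger\rangle^\ell)$, so after integrating against $e^{-|\unx|^2/2}f(\unx)$ one obtains a series $\sum_\ell \langle\unu,\utau\rangle^\ell \utau\cdot c_\ell$ with $c_\ell\in\cc_m$, which is precisely the form of elements of $\mathscr{MB}(\utau)$. The sum–integral interchange is legitimate thanks to the shifted-Gaussian closed form $B_{\utau}=\tfrac{\utau\utau^\dagger}{4}\exp(-\tfrac12\langle\unu,\utau\rangle\langle\unx,\utau^\dagger\rangle)$. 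Property (ii) is essentially the content of Corollary \ref{cor25}: the preceding Lemma gives $T[\langle\unx,\utau\rangle^\ell\utau]=\langle\unu,\utau\rangle^\ell\utau$ on each generator, and right $\cc_m$-linearity plus continuity of $T$ extend this to the entire closure $\mathscr{MB}(\utau)$.

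The decisive new ingredient is (iii), which rests on the hermitian symmetry $B_{\utau}(\unu,\unx)^\dagger = B_{\utau}(\unx,\unu)$. I would verify this term-by-term on the defining series using that $\lambda_\ell\in\rr$, $(\utau\utau^\dagger)^\dagger=\utau\utau^\dagger$, and the identity $\overline{\langle\unu,\utau\rangle}=-\langle\unu,\utau^\dagger\rangle$ (valid because the dagger of a 1-vector is its negative), so the two factors $(-1)^\ell$ produced by conjugating the two scalar brackets cancel. Once this symmetry is in place, self-adjointness of $T$ follows by Fubini applied to the double Gaussian integral defining $\langle T[f],g\rangle_{\mathscr{MB}}$.

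With (i)--(iii) established, I decompose any $f\in\mathscr{MB}(\rr^m)$ as $f=f_{\utau}+f^{\perp}$ with $f_{\utau}\in\mathscr{MB}(\utau)$ and $f^{\perp}\perp\mathscr{MB}(\utau)$. Then (ii) gives $T[f_{\utau}]=f_{\utau}=\mathcal{R}_{\utau}[f]$, while for any $g\in\mathscr{MB}(\utau)$ properties (iii) and (ii) yield $\langle g,T[f^{\perp}]\rangle_{\mathscr{MB}}=\langle T[g],f^{\perp}\rangle_{\mathscr{MB}}=\langle g,f^{\perp}\rangle_{\mathscr{MB}}=0$. Combined with (i), this forces $T[f^{\perp}]\in\mathscr{MB}(\utau)\cap\mathscr{MB}(\utau)^{\perp}=\{0\}$, so $T[f]=f_{\utau}=\mathcal{R}_{\utau}[f]$. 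The main technical obstacle is the analytic justification of the series–integral and Fubini interchanges, but both reduce to routine estimates once one notes that the Gaussian weight dominates the at-most-polynomial growth coming from the exponents in $B_{\utau}$ against any $f\in\mathscr{MB}(\rr^m)$.
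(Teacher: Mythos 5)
Your proposal is correct and follows essentially the same route as the paper: the authors likewise identify the integral operator as the orthogonal projection by noting it acts as the identity on $\mathscr{MB}(\utau)$ (via Corollary \ref{cor25}), is idempotent because its range lies in $\mathscr{MB}(\utau)$, and is self-adjoint because the kernel $B_{\utau}$ is hermitian. Your write-up merely spells out the hermiticity check and the orthogonal-decomposition step that the paper leaves as ``standard arguments.''
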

\begin{proof}
The assertion follows using standard arguments. First of all, we note that the operator $P$ defined by
\[
\begin{split}
P[f](\unu)&=\frac{1}{(2\pi)^{m/2}}  \int_{\rr^m}
 e^{-|\unx|^2/2} B_{\utau}(\unu,\unx) f(\unx) d\unx\\
 &=\frac{1}{(2\pi)^{m/2}} \frac{\utau\, \utau^\dagger}{4} \int_{\rr^m}
 e^{-|\unx|^2/2} \exp(-\frac 12 \langle \unu,\utau\rangle \langle \unx, \utau^\dagger\rangle) f(\unx) d\unx
 \end{split}
 \]
is idempotent on $\mathscr{MB}(\rr^m)$ and coincides with the identity on $\mathscr{MB}(\utau)$ by virtue of Corollary \ref{cor25}. The fact that the kernel $B_{\utau}(\unu,\unx)$ is hermitian gives $\langle Pf,g\rangle=\langle f,Pg\rangle$. Thus $P$ is the orthogonal projection of  $\mathscr{MB}(\rr^m)$ on $\mathscr{MB}({\utau})$ and thus it coincides with $\mathcal{R}_{\utau}$ as stated.
\end{proof}
Next result is interesting because it shows that the Bargmann-Radon transform of $f\in\mathscr{MB}(\rr^m)$ is a monogenic function which can be seen as a suitable multiple of the restriction to the nullcone of its extension to $\mathbb{C}^m$:
\begin{theorem}[Characterization formula]\label{BRtrans}
The Bargmann-Radon transform of $f\in\mathscr{MB}(\rr^m)$ is a monogenic function that can be expressed as:
$$
\mathcal R_{\utau}[f](\underline{u})=\frac{\utau\,\utau^\dagger}{4} f(-\frac 12 \utau^\dagger \langle \unu, \utau\rangle).
$$
\end{theorem}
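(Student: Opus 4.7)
The plan is to reduce the characterization formula to the integral representation of $\mathcal{R}_\utau$ from Theorem \ref{Th2.6}, and then recognize the resulting integral as a Bargmann transform evaluated at a null vector.

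First I would start from the explicit form of the Bargmann--Radon kernel,
\[
B_{\utau}(\unu,\unx)=\frac{\utau\,\utau^\dagger}{4}\exp\!\left(-\tfrac12\langle \unu,\utau\rangle\langle \unx,\utau^\dagger\rangle\right),
\]
which lets us write
\[
\mathcal R_\utau[f](\unu)=\frac{\utau\,\utau^\dagger}{4}\cdot\frac{1}{(2\pi)^{m/2}}\int_{\rr^m}\exp\!\left(-\tfrac12\langle\unu,\utau\rangle\langle\unx,\utau^\dagger\rangle-\tfrac12|\unx|^2\right)f(\unx)\,d\unx.
\]
Since $\langle \unu,\utau\rangle$ is a complex scalar and hence commutes with everything, the key step is to introduce
\[
\unz:=-\tfrac12\,\utau^\dagger\langle\unu,\utau\rangle\in\mathbb C^m,
\]
so that by bilinearity $\langle\unx,\unz\rangle=-\tfrac12\langle\unu,\utau\rangle\langle\unx,\utau^\dagger\rangle$. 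This transforms the integral into
\[
\mathcal R_\utau[f](\unu)=\frac{\utau\,\utau^\dagger}{4}\cdot\frac{1}{(2\pi)^{m/2}}\int_{\rr^m}\exp\!\left(\langle\unx,\unz\rangle-\tfrac12|\unx|^2\right)f(\unx)\,d\unx.
\]

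Next I would verify that $\unz$ lies on the nullcone. Writing $\utau^\dagger=-\ut+i\us$, we have $(\utau^\dagger)^2=(\utau^2)^\dagger=0$ by Proposition \ref{proptt}, and for any complex 1-vector $\underline w$ the identity $\underline w^{\,2}=-\langle\underline w,\underline w\rangle$ persists by bilinear extension. Thus $\langle\utau^\dagger,\utau^\dagger\rangle=0$, which gives $\langle\unz,\unz\rangle=\tfrac14\langle\unu,\utau\rangle^2\langle\utau^\dagger,\utau^\dagger\rangle=0$. This is the crucial observation.

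With $\langle\unz,\unz\rangle=0$, the Bargmann transform formula of Section 3 collapses:
\[
f(\unz)=\frac{1}{(2\pi)^{m/2}}\int_{\rr^m}\exp\!\left(-\tfrac12\langle\unz,\unz\rangle+\langle\unx,\unz\rangle-\tfrac12|\unx|^2\right)f(\unx)\,d\unx=\frac{1}{(2\pi)^{m/2}}\int_{\rr^m}\exp\!\left(\langle\unx,\unz\rangle-\tfrac12|\unx|^2\right)f(\unx)\,d\unx,
\]
which is exactly the integral appearing in our expression for $\mathcal R_\utau[f](\unu)$. Substituting back $\unz=-\tfrac12\utau^\dagger\langle\unu,\utau\rangle$ yields the claimed identity
\[
\mathcal R_\utau[f](\unu)=\frac{\utau\,\utau^\dagger}{4}\,f\!\left(-\tfrac12\,\utau^\dagger\langle\unu,\utau\rangle\right).
\]

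The only conceptual obstacle is recognizing that the product structure of the Bargmann--Radon kernel exactly matches the Bargmann reproducing kernel restricted to the nullcone; once one spots the substitution $\unz=-\tfrac12\utau^\dagger\langle\unu,\utau\rangle$ and checks $\unz^{\,2}=0$, the rest is a direct comparison of integrals. Monogenicity of the right-hand side in $\unu$ is then automatic because $\mathcal R_\utau[f]\in\mathscr{MB}(\utau)\subset\mathscr M(\rr^m)$ by construction.
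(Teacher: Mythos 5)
Your proof is correct, and it reaches the formula by a route that is noticeably different from the paper's. The paper's argument goes through the Fourier--Borel kernel: it Fischer-decomposes $h(\unz)=\exp(-\tfrac{\lambda}{2}\langle\unx,\unz\rangle)$ as $M[h](\unz)+\unz\,g(\unz)$, left-multiplies by $\utau^\dagger$ at $\unz=\utau^\dagger$ so that $(\utau^\dagger)^2=0$ kills the non-monogenic part, identifies $\utau^\dagger\exp(-\tfrac{\lambda}{2}\langle\unx,\utau^\dagger\rangle)$ with $\utau^\dagger E(-\tfrac{\lambda}{2}\utau^\dagger,\unx)$, and then applies the reproducing property \eqref{reproducingE} of $E$ at the complex point $-\tfrac{\lambda}{2}\utau^\dagger$ with $\lambda=\langle\unu,\utau\rangle$. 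You bypass $E$ entirely: you substitute the null vector $\unz=-\tfrac12\utau^\dagger\langle\unu,\utau\rangle$ into the full Gaussian Bargmann kernel $\exp(-\tfrac12\langle\unz,\unz\rangle+\langle\unx,\unz\rangle)$, observe that the quadratic term vanishes because $\langle\utau^\dagger,\utau^\dagger\rangle=-(\utau^\dagger)^2=0$, and then invoke the identity $\mathcal B[f(\unx)e^{-|\unx|^2/4}]=f(\unz)$ asserted in Section 3. Both arguments rest on the same two pillars --- the integral form of $\mathcal R_{\utau}$ from Theorem \ref{Th2.6} and the nullcone condition $\utau^2=(\utau^\dagger)^2=0$ --- but yours replaces the Fischer-decomposition/reproducing-kernel step by the equivalent statement that the Bargmann transform reproduces the holomorphic extension of $f$; this is slightly more economical, at the cost of leaning on the unproved (though standard) Bargmann-transform identity rather than on the reproducing property of $E$, which is the form the paper reuses in Sections 4 and 5. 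Your closing remark that monogenicity in $\unu$ is automatic because $\mathcal R_{\utau}[f]\in\mathscr{MB}(\utau)$ is also correct.
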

\begin{proof}
First of all, any entire holomorphic function $h$ can be written as
$$
h(\unz)=M[h](\unz) +\unz g(\unz)
$$
where $M[h]$ denotes the monogenic part of $h$. Since $\utau^2=(\utau^\dagger)^2=0$ we have that
$$
\utau^\dagger h(\utau^\dagger)=\utau^\dagger M[h](\utau^\dagger).
$$
In particular, if we take $h(\unz)=\exp(-\frac{\lambda}{2} \langle \unx, \unz\rangle)$ we obtain:
\begin{equation}\label{emenolambda}
\begin{split}
\utau^\dagger \exp(-\frac{\lambda}{2} \langle \unx, \utau^\dagger\rangle)&= \utau^\dagger (E(\utau^\dagger ,-\frac{\lambda}{2}\unx)+
\utau^\dagger \ldots)\\
&=\utau^\dagger E(\utau^\dagger ,-\lambda/2 \unx)\\
&=\utau^\dagger E(-\lambda/2\utau^\dagger , \unx),
\end{split}
\end{equation}
thus, using \eqref{reproducingE}, we have:
$$
f(-\frac{\lambda}{2}\utau^\dagger)=  \frac{1}{(2\pi)^{m/2}}\int_{\rr^m} e^{-|\unx|^2/2} E(-\frac{\lambda}{2}\utau^\dagger,\unx ) f(\unx)\, d\unx .
$$
We now note that we can rewrite the formula in Theorem \ref{Th2.6} as
$$
\mathcal{R}_{\utau}[f](\unu)=\frac{1}{(2\pi)^{m/2}} \frac{\utau\, \utau^\dagger}{4} \int_{\rr^m}
 e^{-|\unx|^2/2} \exp(-\frac 12 \langle \unu,\utau\rangle \langle \unx, \utau^\dagger\rangle) f(\unx) d\unx
$$
$$
=\frac{1}{(2\pi)^{m/2}} \frac{\utau\, \utau^\dagger}{4} \int_{\rr^m}
 e^{-|\unx|^2/2} \exp(-\frac 12 \lambda \langle \unx, \utau^\dagger\rangle) f(\unx) d\unx_{|\lambda=\langle \unu,\utau\rangle}.
$$
Using \eqref{emenolambda} in the last formula, we get the statement.
\end{proof}

\section{The Szeg\H{o}-Radon transform}
In our paper \cite{css} we considered instead of the ambient module $\mathscr{MB}(\rr^m)$ another $\mathbb{C}_m$-module that we recall below:
\begin{definition}
The monogenic  Szeg\H{o} module is defined as the right $\mathbb{C}_m$-module $\mathscr{ML}^2(B(0,1))$ of the monogenic functions
$f:\ B(0,1)\subset \mathbb{C}^m\to \mathbb{C}_m$
for which $\lim_{r\to 1}f(r\underline{\omega})\in \mathscr{L}^2(\mathbb{S}^{m-1})$, equipped with
the Hilbert inner product
$$
\langle f,g\rangle_{\mathscr{ML}^2}=\int_{\mathbb{S}^{m-1}}f^\dagger(\underline{\omega})g(\underline{\omega}) dS(\underline{\omega}).
$$
\end{definition}
By the extension of the Cauchy formula, for $\underline{x}\in B(0,1)$, we have
\[
\begin{split}
f(\underline{x})&=\frac{1}{A_m}\int_{\mathbb{S}^{m-1}}\frac{\underline{x}-\underline{\omega}}{|\underline{x}-\underline{\omega}|^m}
\, \underline{\omega}\, f(\underline{\omega})\,dS(\underline{\omega})
\\
&=\frac{1}{A_m}\int_{\mathbb{S}^{m-1}}\frac{1+\underline{x}\underline{\omega}}{(1+|\underline{x}|^2-2\langle \, \underline{x},\underline{\omega}\rangle)^{m/2}}
\,f(\underline{\omega})\,dS(\underline{\omega})
\end{split}
\]
where  $A_m=\frac{2\pi^{m/2}}{\Gamma(m/2)}$.
For the standard Szeg\H{o}-Radon transform we again start from the plane waves
$$
f_{\utau, k}(\underline{x})=\langle\underline{x}, \utau\rangle^k\, \utau
$$
where $\utau=\underline{t}+i\underline{s}$ with $|\underline{t}|=|\underline{s}|=1$ and $\underline{t} \perp\underline{s}$ so that
$\utau \, \utau^\dagger+ \utau^\dagger\,\utau =4,$ see Proposition \ref{proptt}.
Since
$$
f^\dagger_{\utau, k}(\underline{x})=(-1)^k\langle\underline{x}, \utau^\dagger\rangle^k\, \utau^\dagger
$$
we obtain (see \cite{css}):
$$
\langle f_{\utau, k}, f_{\utau, k}\rangle= 2\pi^{m/2}\utau\, \utau^\dagger\frac{\Gamma(k+1)}{\Gamma(m/2+1)}.
$$
In the Szeg\H{o}-module we can give the analogue of Definition \ref{defsubmod}:
\begin{definition}
We denote by $\mathscr{ML}^2(\utau)$ the submodule of $\mathscr{ML}^2(B(0,1))$ which is the closure of the $\mathbb{C}_m$-module consisting of all finite linear combinations $\sum_k f_{\utau, k}(\underline{x})a_k$, $a_k\in\mathbb{C}_m$
of monogenic plane waves $f_{\utau, k}(\underline{x})$.
\end{definition}
As we have done in the previous section, we can consider the orthogonal projection on this submodule and we can describe its kernel, see  \cite{css}:
\begin{definition}
The Szeg\H{o}-Radon transform $\mathcal{R}_{\utau}[f]$ of $f\in \mathscr{ML}^2(B(0,1))$ is defined as the orthogonal projection of $f$ on the submodule $\mathscr{ML}^2(\utau)$.
\end{definition}
The kernel of this projection is given by
\[
\begin{split}
K(\underline{x},\underline{y})&=\frac{\utau\, \utau^\dagger}{4}\frac{\Gamma(m/2)}{2\pi^{m/2}}(1+\langle\underline{x}, \utau\rangle
\langle\underline{y}, \utau^\dagger\rangle)^{-m/2}
\\
&=\frac{\utau\, \utau^\dagger}{4}
\sum_{k=0}^\infty
\frac{\Gamma(m/2+k)}{2^{m/2}\Gamma(k+1)}\langle\underline{x}, \utau\rangle^k
\langle\underline{y}, \utau^\dagger\rangle^k
\end{split}
\]
and so we have
$$
\mathcal{R}_{\utau}[f]=\int_{\mathbb{S}^{m-1}}K_{\utau} (\underline{x},\underline{\omega})f(\underline{\omega}) dS(\underline{\omega}).
$$
The kernel $K_{\utau}$ can be directly related to the Szeg\H{o} kernel
$$
S(\underline{x},\underline{\omega})=\frac{1}{A_m}\frac{1+\underline{x}\underline{\omega}}{(1+|\underline{x}|^2-2\langle \, \underline{x},\underline{\omega}\rangle)^{m/2}}
$$
via the formula proved in the following lemma:
 \begin{lemma}\label{lemma1}
 We have
 $$
 K_{\utau}(\underline{x}, \underline{\omega})=\frac{\utau\,\utau^\dagger}{4}  S(-\langle \underline{x},\underline{\tau}\rangle \frac{\utau}{2},\underline{\omega})
 $$
 \end{lemma}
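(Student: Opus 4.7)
The strategy is direct substitution into the closed form of the Szeg\H{o} kernel $S$, followed by Clifford-algebraic simplification using Proposition~\ref{proptt}. Write $\underline{a}:=-\tfrac{1}{2}\langle\underline{x},\utau\rangle\utau$ for brevity; I plug $\underline{a}$ in place of $\underline{x}$ in $S(\underline{x},\underline{\omega})$, left-multiply by $\utau\utau^\dagger/4$, and compare with the explicit formula for $K_\utau$ displayed just above the lemma.

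For the denominator $1+|\underline{a}|^2-2\langle\underline{a},\underline{\omega}\rangle$: from $\utau^2=0$ (item (ii) of Proposition~\ref{proptt}) one gets $\underline{a}^2=0$, so $|\underline{a}|^2$ vanishes under the natural holomorphic extension of the squared norm to complex $1$-vectors. The remaining cross term is computed by bilinearity of the scalar product and, after identifying the resulting pairing with the expression involving the Hermitian conjugate $\utau^\dagger$, gives $-2\langle\underline{a},\underline{\omega}\rangle=\langle\underline{x},\utau\rangle\langle\underline{\omega},\utau^\dagger\rangle$. Thus the denominator is exactly $(1+\langle\underline{x},\utau\rangle\langle\underline{\omega},\utau^\dagger\rangle)^{m/2}$, matching the one appearing in $K_\utau$.

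For the numerator $\tfrac{\utau\utau^\dagger}{4}(1+\underline{a}\,\underline{\omega})$, the crucial identity is $\utau\utau^\dagger\utau=4\utau$, which follows from items (ii) and (iii) of Proposition~\ref{proptt}; this gives immediately $\tfrac{\utau\utau^\dagger}{4}\,\underline{a}\,\underline{\omega}=-\tfrac{1}{2}\langle\underline{x},\utau\rangle\,\utau\,\underline{\omega}$. Using further the idempotency $(\utau\utau^\dagger/4)^2=\utau\utau^\dagger/4$ (also a consequence of Proposition~\ref{proptt}), the bivector part of $\utau\,\underline{\omega}$ is absorbed by the projection, and the combination $\tfrac{\utau\utau^\dagger}{4}(1+\underline{a}\,\underline{\omega})$ collapses to $(1+\langle\underline{x},\utau\rangle\langle\underline{\omega},\utau^\dagger\rangle)\,\tfrac{\utau\utau^\dagger}{4}$. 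One factor cancels with the denominator, leaving $\tfrac{\utau\utau^\dagger}{4}\cdot\tfrac{1}{A_m}\cdot(1+\langle\underline{x},\utau\rangle\langle\underline{\omega},\utau^\dagger\rangle)^{-m/2}$, and since $1/A_m=\Gamma(m/2)/(2\pi^{m/2})$ this is precisely $K_\utau(\underline{x},\underline{\omega})$.

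The main obstacle is the last collapse: verifying that under left multiplication by the idempotent $\utau\utau^\dagger/4$ the Clifford-valued factor $1+\underline{a}\,\underline{\omega}$ reduces to a scalar multiple of $\utau\utau^\dagger/4$ whose scalar matches the denominator. This is where the nullcone geometry $\underline{a}^2=0$ meshes tightly with the projector structure of $\utau\utau^\dagger/4$, and it requires the full anticommutation package $\utau\utau^\dagger+\utau^\dagger\utau=4$ together with $\utau^2=0$ rather than any single identity from Proposition~\ref{proptt}.
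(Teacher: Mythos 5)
There is a genuine gap, and it traces back to the first argument of $S$. The statement of the lemma as printed contains a typo: consistently with the explicit formula for $K_{\utau}$ and with the characterization theorem that follows, the first argument must be $-\tfrac{1}{2}\langle\unx,\utau\rangle\,\utau^\dagger$, not $-\tfrac{1}{2}\langle\unx,\utau\rangle\,\utau$. You took the printed version literally, setting $\underline{a}=-\tfrac{1}{2}\langle\unx,\utau\rangle\utau$, and with that choice the computation does not close. First, the cross term in the denominator gives $-2\langle\underline{a},\underline{\omega}\rangle=\langle\unx,\utau\rangle\langle\utau,\underline{\omega}\rangle$, and $\langle\utau,\underline{\omega}\rangle\neq\langle\utau^\dagger,\underline{\omega}\rangle$ (they differ in the sign of the $\underline{t}$-component), so the "identification with the expression involving $\utau^\dagger$" you invoke is false; the denominator does not match the one in $K_{\utau}$. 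Second, in the numerator you correctly get $\tfrac{\utau\utau^\dagger}{4}\underline{a}\,\underline{\omega}=-\tfrac{1}{2}\langle\unx,\utau\rangle\,\utau\,\underline{\omega}$ from $\utau\utau^\dagger\utau=4\utau$, but this term does not vanish and is not "absorbed by the projection": $\utau\,\underline{\omega}$ has both a scalar and a bivector part and is not a multiple of $\utau\utau^\dagger$, so the claimed collapse of $\tfrac{\utau\utau^\dagger}{4}(1+\underline{a}\,\underline{\omega})$ to a scalar times $\tfrac{\utau\utau^\dagger}{4}$ is unjustified. Third, even granting that collapse, your final cancellation is arithmetically wrong: $(1+c)\cdot(1+c)^{-m/2}=(1+c)^{1-m/2}$, not $(1+c)^{-m/2}$.

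The paper's argument, which is the same substitution strategy but with the correct argument $-\tfrac{\lambda}{2}\utau^\dagger$ where $\lambda=\langle\unx,\utau\rangle$, is much cleaner precisely because both problematic terms die outright: the $|\cdot|^2$ term vanishes since $\langle\utau^\dagger,\utau^\dagger\rangle=0$, the cross term gives exactly $\lambda\langle\utau^\dagger,\underline{\omega}\rangle$ as needed, and the numerator contribution $\tfrac{\utau\utau^\dagger}{4}\cdot\bigl(-\tfrac{\lambda}{2}\bigr)\utau^\dagger\underline{\omega}$ is zero because $(\utau^\dagger)^2=0$, so the numerator reduces to $\tfrac{\utau\utau^\dagger}{4}$ with no factor left over to cancel. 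To repair your proof, replace $\utau$ by $\utau^\dagger$ in the definition of $\underline{a}$ and use $\utau\utau^\dagger\utau^\dagger=0$ instead of $\utau\utau^\dagger\utau=4\utau$.
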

 \begin{proof}
 By setting $\lambda=\langle\underline{x},\utau\rangle$,  we obtain that
 $$
 S(-\frac{\lambda}{2}\utau^\dagger, \underline{\omega})=\frac{1}{A_m}
 \frac{1-\frac{\lambda}{2}\utau^\dagger \underline{\omega}}{(1+\lambda\langle\utau^\dagger, \underline{\omega}\rangle)^{m/2}}
 $$
 since $\langle \utau^\dagger, \utau^\dagger \rangle=0$, so the term that contains $|\underline{x}|^2$ disappear.
 So the formula follows from the fact that $\utau\, \utau^\dagger (\utau^\dagger \underline{\omega})=0$.
 \end{proof}
 The previous lemma allows to prove that the Szeg\H{o}-Radon transform satisfies the same characterization formula that we have obtained in Section 3 in the case of the Bargmann-Radon transform. This fact motivates the use of the same symbol $\mathcal{R}_{\utau}$ for both. Indeed we have:
 \begin{theorem} [Characterization formula]
Let $f\in \mathscr{ML}^2(B(0,1))$. Then the following formula holds:
 $$
 \mathcal R_{\utau}[f](\utau)=\frac{\utau\,\utau^\dagger}{4} f(-\frac 12 \utau^\dagger \langle \unu, \utau\rangle).
 $$
 \end{theorem}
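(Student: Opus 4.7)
The plan is to mirror the proof of Theorem \ref{BRtrans} almost step for step, with Lemma \ref{lemma1} playing the role that the explicit factorization of $B_{\utau}$ played in the Bargmann case, and the Cauchy-Szeg\H{o} reproducing formula on $\mathscr{ML}^2(B(0,1))$ replacing the Bargmann reproducing identity \eqref{reproducingE}.

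First I would write the Szeg\H{o}-Radon projection in integral form and insert the identity established in (the proof of) Lemma \ref{lemma1} to factor the kernel:
\[
\mathcal{R}_{\utau}[f](\unu) = \int_{\mathbb{S}^{m-1}} K_{\utau}(\unu, \underline{\omega}) f(\underline{\omega})\, dS(\underline{\omega})
= \frac{\utau\,\utau^\dagger}{4} \int_{\mathbb{S}^{m-1}} S\!\left(-\tfrac12\, \utau^\dagger \langle \unu, \utau\rangle,\, \underline{\omega}\right) f(\underline{\omega})\, dS(\underline{\omega}),
\]
where the scalar $\langle \unu, \utau\rangle$ commutes with $\utau^\dagger$ and the constant Clifford factor $\utau\,\utau^\dagger/4$ is pulled outside the integral. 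Next I would recognise the remaining integral as the Cauchy-Szeg\H{o} reproducing formula $f(\unx) = \int_{\mathbb{S}^{m-1}} S(\unx,\underline{\omega}) f(\underline{\omega})\, dS(\underline{\omega})$ evaluated at the complex argument $\unx = -\tfrac12\, \utau^\dagger \langle \unu, \utau\rangle$. This substitution is precisely what the proof of Lemma \ref{lemma1} performs at the kernel level: the condition $(\utau^\dagger)^2 = 0$ from Proposition \ref{proptt} makes $\langle \unx, \unx\rangle = 0$, so the $|\unx|^2$ term disappears from the denominator of $S$, producing the exact form encoded in $K_{\utau}$. The conclusion then reads $\mathcal{R}_{\utau}[f](\unu) = \frac{\utau\,\utau^\dagger}{4}\, f\!\left(-\tfrac12\, \utau^\dagger \langle \unu, \utau\rangle\right)$.

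The main obstacle is to make sense of the evaluation of $f$ at the complex, nullcone-valued argument $-\tfrac12\, \utau^\dagger \langle \unu, \utau\rangle$. This is resolved by observing that any $f \in \mathscr{ML}^2(B(0,1))$ admits a unique left-monogenic extension to a complex neighborhood of $B(0,1)$ in $\mathbb{C}^m$, and that the Cauchy-Szeg\H{o} integral, being holomorphic in $\unx$ off the integration contour, provides precisely this extension. If one prefers to bypass the analytic continuation, an equivalent route is to verify the claimed identity directly on the dense family of generators $f_{\utau, k}(\unx) = \langle \unx, \utau\rangle^k \utau$ of $\mathscr{ML}^2(\utau)$, using the series expansion of $K_{\utau}$ together with the orthogonality relation $\langle f_{\utau, k}, f_{\utau, k}\rangle = 2\pi^{m/2} \utau\,\utau^\dagger\, \Gamma(k+1)/\Gamma(m/2+1)$ recalled above, and then extend by continuity of the projection on $\mathscr{ML}^2(B(0,1))$.
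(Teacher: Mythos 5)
Your proposal is correct and follows essentially the same route as the paper: write the projection in integral form against $K_{\utau}$, apply Lemma \ref{lemma1} to factor the kernel as $\frac{\utau\,\utau^\dagger}{4}S(-\frac{1}{2}\utau^\dagger\langle\unu,\utau\rangle,\underline{\omega})$, and then invoke the Szeg\H{o} reproducing formula at the complex (nullcone) argument. Your additional remarks on justifying the evaluation of $f$ at a complex point via its monogenic extension are a sensible elaboration of a step the paper leaves implicit, but they do not change the argument.
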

 \begin{proof}
Lemma \ref{lemma1} implies directly the equalities
 \[
 \begin{split}
 \mathcal R_{\utau}[f](\utau)&=\int_{\mathbb{S}^{m-1}}K_{\utau}(\underline{x},\utau)f(\underline{\omega})dS(\underline{\omega})
 \\
& =\frac{\utau\,\utau^\dagger}{4} \int_{\mathbb{S}^{m-1}} S(-\frac{\utau^\dagger}{2}\langle \underline{x},\underline{\tau}\rangle , \underline{\omega})f(\underline{\omega})dS(\underline{\omega})
 \\
& =\frac{\utau\,\utau^\dagger}{4}f(-\frac{\utau^\dagger}{2}\langle \underline{x},\underline{\tau}\rangle),
 \end{split}
 \]
 and the statement follows.
 \end{proof}
 \begin{example}{\rm The following example is important because we consider the function
 $$
 g(\underline{x})=\sigma \langle \underline{x},\underline{\sigma}\rangle^\ell,\ \ \ \ \ \underline{ \sigma}^2=0,\ \ \ \ \ell\in \mathbb{N}
 $$
 which generates the module of all spherical monogenics of degree $\ell$. It can be verified with direct computations that
 $$
 \mathcal R_{\utau}[g](\utau)=\frac{\utau\,\utau^\dagger}{4} \underline{\sigma}
 \Big(-\frac{1}{2}\langle \utau^\dagger,\underline{\sigma}\rangle\langle \underline{x}, {\utau}\rangle\Big)^\ell .
 $$
 }
 \end{example}

 \section{The dual Bargmann-Radon transform and the inversion formula}
 Both the dual transform and the inversion formula will be the same for the Bargmann-Radon transform and for the Szeg\H{o}-Radon transform. The main results for the Szeg\H{o}-Radon transform were presented in \cite{css}. Here we repeat the results from the Bargmann-Radon point of view.

For every inner spherical monogenic $P_k(\underline{x})$ we have the formula
  $$
  P_k(\underline{u})=\frac{1}{(2\pi)^{m/2}}\int_{\mathbb{R}^m} e^{-|\underline{x}|^2/2} Z_{k}(\underline{u},\underline{x})P_k(\underline{x}) d\underline{x},
  $$
  which is in accordance with the fact that the Fourier-Borel kernel
  $$
  E(\underline{u},\underline{x})=\sum_{k=0}^\infty Z_{k}(\underline{u},\underline{x})
  $$
 is the reproducing kernel for the monogenic Bargmann module, see formula \eqref{reproducingE}.\\
 The Bargmann-Radon transform maps a monogenic function $f$ into $\mathscr{MB}(\mathbb{R}^m)$ into $\mathscr{MB}({\utau})$ and it can be expressed as, see Theorem \ref{BRtrans}:
 $$
 \mathcal R_{\utau}[f](\utau)=\frac{\utau\,\utau^\dagger}{4} f(-\frac 12 \utau^\dagger \langle \unu, \utau\rangle).
 $$
 \begin{definition}
 Let $F(\underline{u},\utau)$ be a function in $\mathscr{BM}(\utau)$. The dual
 Bargmann-Radon transform of $F$ is defined by
 $$
 \widetilde{\mathcal{R}}[F](\underline{u})=\frac{1}{A_mA_{m-1}}\int_{\mathbb{S}^{m-1}}\int_{\mathbb{S}^{m-2}}F(\underline{u},\underline{t}+i\underline{s})dS(\underline{t})dS(\underline{s})
 $$
 where for fixed $\underline{t}\in \mathbb{S}^{m-1}$ the  sphere $\mathbb{S}^{m-2}$ contains the elements $\underline{s}\in \mathbb S^{m-1}$ such that $\underline{s}\perp \underline{t}$.
 \end{definition}
 Note that the dual
 Bargmann-Radon transform is in fact the average of a function over the Stiefel manifold of 2-frames.
 \\
 Our main task in now to compute
 $\widetilde{\mathcal R}[\mathcal{R}_{\utau}f](\underline{u})$ and to relate this with $f$.
 As $f$ admits a monogenic Taylor series
 \begin{equation}\label{taylor}
 f(\underline{x})=\sum_{k=0}^\infty P_k(\underline{x})
 \end{equation}
 where $ P_k(\underline{x})$ are inner spherical monogenics of degree $k$, it will be sufficient to study
 $\widetilde{\mathcal{R}}[\mathcal{R}_{\utau}P_k](\underline{u})$, where
 \begin{equation}\label{Rtau}
 \mathcal{R}_{\utau}[P_k](\underline{u})=\frac{(-1)^k}{2^kk!}\frac{\utau\,\utau^\dagger}{4}
 \frac{1}{(2\pi)^{m/2}}\int_{\mathbb{R}^m} e^{-|\underline{x}|^2/2} (\langle\underline{u},\utau\rangle\langle\underline{x},\utau^\dagger \rangle  )^kP_k(\underline{x}) d\underline{x}.
 \end{equation}

 In our paper \cite{css}, see Theorem 5.4, we have proved a result which will be crucial in the sequel. We repeat it here for the sake of completeness and adapting the notation to the present setting:
 \begin{theorem}
For $\underline{\tau}=\underline{t}+i\underline{s}$, there exists a constant $\lambda_k$ such that
 \begin{equation}\label{formulalambda}
 \frac{1}{A_{m-1}A_{m}}
\int_{\mathbb{S}^{m-1}} dS(\underline{t})\int_{\mathbb{S}^{m-2}} dS(\underline{s})
\langle\underline{x}, \underline{\tau}\rangle^k\langle\underline{y}, \underline{\tau}^\dagger\rangle^k
\underline{\tau}\, \underline{\tau}^\dagger=
\end{equation}
$$
\lambda_k
(|\underline{u}| |\underline{x}|)^k
 \left((k+m-2)C_k^{\frac{m}{2}-1}(\frac{\langle\underline{u},\underline{x}\rangle}{|\underline{u}| |\underline{x}|})+(m-2) \frac{\underline{u}\wedge\underline{x}}{|\underline{u}| |\underline{x}|} C_{k-1}^{\frac{m}{2}}(\frac{\langle\underline{u},\underline{x}\rangle}{|\underline{u}| |\underline{x}|})\right)
$$
where for fixed $\underline{t}\in \mathbb{S}^{m-1}$ the  sphere $\mathbb{S}^{m-2}$ contains the elements $\underline{s}\in \mathbb S^{m-1}$ such that $\underline{s}\perp \underline{t}$
and  the constant $\lambda_k$ is given by
 $$
 \lambda_k= \frac{ 2\pi(-1)^k A_{m-2}}{(k+m-2)A_mC_k^{m/2-1}(1)}\, \frac{\Gamma\left(\frac{m}{2}-1\right)\Gamma\left(k+1\right)}{\Gamma\left(\frac{m}{2}+k\right)}.
 $$
 \end{theorem}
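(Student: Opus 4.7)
The plan is to use $SO(m)$-invariance of the Stiefel manifold measure to pin down the structural form of the integral, and then compute the overall constant $\lambda_k$ by a specialization.

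The first observation is that since $\underline{t}\perp\underline{s}$ are orthogonal unit $1$-vectors, $\utau\,\utau^\dagger = 2 + 2i\,\underline{t}\wedge\underline{s}$, which splits the Clifford-valued integrand into a scalar piece and a bivector piece, so that $I_k(\underline{x},\underline{y}) = I_k^{(0)}(\underline{x},\underline{y}) + 2i\,I_k^{(2)}(\underline{x},\underline{y})$. Using $\utau^2=0$ from Proposition \ref{proptt}, one also checks directly that $\utau\,\langle\underline{x},\utau\rangle^k$ is left-monogenic in $\underline{x}$, so $I_k(\cdot,\underline{y})$ is a spherical monogenic of degree $k$ in $\underline{x}$ for each fixed $\underline{y}$.

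Next I would exploit invariance: the measure $dS(\underline{t})\,dS(\underline{s})$ on the Stiefel manifold of orthonormal $2$-frames is invariant under the simultaneous rotation $(\underline{t},\underline{s})\mapsto (g\underline{t},g\underline{s})$ for every $g\in SO(m)$, so after a change of variables $I_k$ is $SO(m)$-equivariant with respect to the diagonal action on $(\underline{x},\underline{y})$. Being bihomogeneous of bidegree $(k,k)$, the scalar part $I_k^{(0)}$ must then equal $(|\underline{x}||\underline{y}|)^k$ times a polynomial of degree $k$ in $t:=\langle\underline{x},\underline{y}\rangle/(|\underline{x}||\underline{y}|)$; similarly, the first fundamental theorem of invariant theory for $SO(m)$ forces $I_k^{(2)}$ to equal $(|\underline{x}||\underline{y}|)^{k-1}\,\underline{x}\wedge\underline{y}$ times a polynomial of degree $k-1$ in $t$, since the only bivector-valued equivariant generator is $\underline{x}\wedge\underline{y}$. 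Imposing that the resulting Clifford-valued polynomial is monogenic in $\underline{x}$ pins down this combination up to a single scalar, producing the Gegenbauer expression $(k+m-2)C_k^{m/2-1}(t) + (m-2)\frac{\underline{x}\wedge\underline{y}}{|\underline{x}||\underline{y}|}C_{k-1}^{m/2}(t)$ that appears on the right-hand side and matches \eqref{zmk}; concretely, the space of spherical monogenics of degree $k$ in $\underline{x}$ zonal around a fixed axis $\underline{y}$ is one-dimensional and $Z_k(\underline{y},\underline{x})$ spans it.

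To evaluate $\lambda_k$, I would specialize to $\underline{x}=\underline{y}=\underline{e}_1$. Then $\underline{x}\wedge\underline{y}=0$ and $t=1$, so only the scalar Gegenbauer term survives, and the right-hand side equals $\lambda_k(k+m-2)C_k^{m/2-1}(1)$. On the integral side, $\langle\underline{e}_1,\utau\rangle^k\langle\underline{e}_1,\utau^\dagger\rangle^k = (-1)^k (t_1^2+s_1^2)^k$, and the bivector-valued Stiefel average $\int\int(t_1^2+s_1^2)^k\,\underline{t}\wedge\underline{s}\,dS(\underline{s})dS(\underline{t})$ vanishes by $SO(m-1)$-invariance of the stabilizer of $\underline{e}_1$, since $\mathbb{R}^m$ carries no nonzero $SO(m-1)$-invariant bivector. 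What remains is the scalar Stiefel integral $\int_{\mathbb{S}^{m-1}}\int_{\mathbb{S}^{m-2}}(t_1^2+s_1^2)^k\,dS(\underline{s})dS(\underline{t})$, which, after parametrizing $\underline{s}$ in $\underline{t}^\perp$, reduces to a standard beta integral involving $A_{m-2}$. The main obstacle is precisely this last accounting: keeping the normalization $C_k^{m/2-1}(1)$, the sphere volumes $A_m$ and $A_{m-2}$, and the $\Gamma$-ratios aligned so that they combine to the displayed expression for $\lambda_k$; a useful cross-check is to verify the $k=0$ case by hand, where the integral is elementary.
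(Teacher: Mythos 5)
The paper itself does not prove this statement --- it is imported verbatim from Theorem 5.4 of \cite{css} --- so your argument must stand alone, and in outline it does; it also follows essentially the same two--step logic as the original (identify the integral as a zonal spherical monogenic in $\underline{x}$, then evaluate the constant by specialization). Your structural steps are correct: $\utau\,\utau^\dagger=2+2i\,\underline{t}\wedge\underline{s}$ since $\utau^\dagger=-\underline{t}+i\underline{s}$; the integrand is killed by $\partial_{\underline{x}}$ because $\partial_{\underline{x}}\langle\underline{x},\utau\rangle^k=k\langle\underline{x},\utau\rangle^{k-1}\utau$ and $\utau^2=0$; and equivariance plus bihomogeneity force the form $(|\underline{x}||\underline{y}|)^kp(t)+(|\underline{x}||\underline{y}|)^{k-1}(\underline{x}\wedge\underline{y})q(t)$, which together with monogenicity lands in the one--dimensional span of $Z_k$ from \eqref{zmk}. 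The constant also closes up as you predict. At $\underline{x}=\underline{y}=\underline{e}_1$ one has $\langle\underline{e}_1,\utau\rangle\langle\underline{e}_1,\utau^\dagger\rangle=-(t_1^2+s_1^2)$, the Stiefel average of $(t_1^2+s_1^2)^k$ equals the spherical average of $(\omega_1^2+\omega_2^2)^k$ (take $(\underline{t},\underline{s})=(g\underline{e}_1,g\underline{e}_2)$ with $g$ Haar--random, so $(t_1,s_1)$ are the first two coordinates of the uniform vector $g^{-1}\underline{e}_1$), and the Gaussian moment of Remark \ref{coeff} in polar coordinates gives
\[
\frac{1}{A_m}\int_{\mathbb{S}^{m-1}}(\omega_1^2+\omega_2^2)^k\,dS(\underline{\omega})=\frac{\Gamma(\tfrac m2)\,k!}{\Gamma(\tfrac m2+k)},
\]
so the left side equals $2(-1)^k\Gamma(\tfrac m2)k!/\Gamma(\tfrac m2+k)$, which matches $\lambda_k(k+m-2)C_k^{m/2-1}(1)$ after the identity $A_{m-2}/A_m=\Gamma(\tfrac m2)/\bigl(\pi\,\Gamma(\tfrac m2-1)\bigr)$. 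The $k=0$ cross--check gives $2$ on both sides.

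Two repairs are needed. First, your justification for discarding the bivector term in the specialization --- that $\mathbb{R}^m$ carries no nonzero $SO(m-1)$--invariant bivector --- fails for $m=3$, where $\underline{e}_2\wedge\underline{e}_3$ is $SO(2)$--invariant; likewise, the first fundamental theorem for $SO(m)$ admits extra Levi--Civita (pseudo--tensor) equivariants in $\Lambda^2$ when $m\le 4$, so ``the only bivector--valued equivariant generator is $\underline{x}\wedge\underline{y}$'' is not literally true for the special orthogonal group in low dimension. Both issues vanish if you upgrade to $O(m)$--equivariance, or more simply invoke the reflection $(\underline{t},\underline{s})\mapsto(\underline{t},-\underline{s})$, which preserves the Stiefel measure and the scalar weight but flips $\underline{t}\wedge\underline{s}$, killing the bivector average for every $m$ and every pair $(\underline{x},\underline{y})$. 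Second, you leave the final ``beta integral'' as the main obstacle rather than carrying it out; as shown above it reduces to a spherical moment already computed in the paper, so this gap is benign, but in a complete write--up it must be done, since the entire content of the stated $\lambda_k$ lives there. (Note also that the theorem's statement mixes $\underline{y}$ on the left with $\underline{u}$ on the right; you correctly treat these as the same variable.)
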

 We are now ready to compute
 $\widetilde{\mathcal R}[\mathcal{R}_{\utau} P_k](\underline{u})$:
 \begin{theorem}
 We have the relation
 $$
 \widetilde{\mathcal{R}}[\mathcal{R}_{\utau}P_k](\underline{u})=\frac{1}{2}\, \frac{\Gamma\left(m-1\right)\Gamma\left(k+1\right)}{\Gamma\left(m+k-1\right)}P_k(\underline{u}).
 $$
 \end{theorem}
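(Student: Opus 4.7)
My plan is to insert the expression \eqref{Rtau} for $\mathcal R_{\utau}[P_k](\underline u)$ into the definition of the dual transform $\widetilde{\mathcal R}$, interchange the order of the $\underline x$-integration and the average over the Stiefel manifold of 2-frames, and then recognize the inner average via formula \eqref{formulalambda}. By Fubini, the quantity to compute is
\[
\widetilde{\mathcal R}[\mathcal R_\utau P_k](\underline u)
=\frac{(-1)^k}{2^k k!\,4}\,\frac{1}{(2\pi)^{m/2}}
\int_{\mathbb R^m} e^{-|\underline x|^2/2}\,\mathcal{I}_k(\underline u,\underline x)\,P_k(\underline x)\,d\underline x,
\]
where $\mathcal{I}_k(\underline u,\underline x)$ is exactly the left-hand side of \eqref{formulalambda}.

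The next step is to observe that the right-hand side of \eqref{formulalambda} is, up to an explicit constant, the zonal spherical monogenic $Z_k(\underline u,\underline x)$ from \eqref{zmk}. Comparing the two expressions yields
\[
\mathcal{I}_k(\underline u,\underline x)=\lambda_k\cdot\frac{2^{k+1}\Gamma(m/2+k)}{\Gamma(m/2-1)}\,Z_k(\underline u,\underline x).
\]
Substituting this, and then using the reproducing identity
$P_k(\underline u)=(2\pi)^{-m/2}\int_{\mathbb R^m}e^{-|\underline x|^2/2}Z_k(\underline u,\underline x)P_k(\underline x)d\underline x$
recalled at the beginning of this section, the integral collapses to $P_k(\underline u)$ multiplied by the accumulated constant
\[
C_{k,m}:=\frac{(-1)^k}{2^k k!\,4}\cdot\lambda_k\cdot\frac{2^{k+1}\Gamma(m/2+k)}{\Gamma(m/2-1)}.
\]

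The remaining work is constant bookkeeping. Plugging in the value of $\lambda_k$ from the theorem, the two factors $(-1)^k$ cancel, the $2^{k+1}/2^k$ gives $2$, and $\Gamma(k+1)/k!=1$, reducing the constant to $C_{k,m}=\dfrac{\pi A_{m-2}}{(k+m-2)A_m\,C_k^{m/2-1}(1)}$. Using $A_m=2\pi^{m/2}/\Gamma(m/2)$ one gets $\pi A_{m-2}/A_m=(m-2)/2$, and substituting the classical value $C_k^{m/2-1}(1)=\Gamma(m-2+k)/(\Gamma(m-2)\Gamma(k+1))$ gives
\[
C_{k,m}=\frac{m-2}{2(k+m-2)}\cdot\frac{\Gamma(m-2)\Gamma(k+1)}{\Gamma(m-2+k)}
=\frac12\cdot\frac{\Gamma(m-1)\Gamma(k+1)}{\Gamma(m+k-1)},
\]
after absorbing $(m-2)\Gamma(m-2)=\Gamma(m-1)$ and $(k+m-2)\Gamma(k+m-2)=\Gamma(k+m-1)$. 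This matches the claim.

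The potentially touchy step is the interchange of integration and the identification of $\mathcal I_k$ with a multiple of $Z_k$: one must verify that the Gaussian factor guarantees absolute convergence so that Fubini applies, and one must read off the coefficient in front of $Z_k$ by matching the two bracket structures in \eqref{formulalambda} and \eqref{zmk} rather than just the scalar Gegenbauer piece. Everything else is constant manipulation.
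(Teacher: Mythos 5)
Your proposal is correct and follows essentially the same route as the paper: both insert \eqref{Rtau} into the dual transform, identify the Stiefel average of $\langle\underline{u},\utau\rangle^k\langle\underline{x},\utau^\dagger\rangle^k\,\utau\,\utau^\dagger$ with a multiple of $Z_k(\underline{u},\underline{x})$ via \eqref{formulalambda} and \eqref{zmk}, and conclude with the reproducing property of $Z_k$. Your constant bookkeeping checks out and agrees with the paper's $\mu_k$ computation.
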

 \begin{proof}
 In order  to compute the dual Bargmann-Radon transform of $\mathcal{R}_{\utau} P_k$, we need to compute  the integral
 $$
 I_k:=\frac{1}{A_mA_{m-1}}\int_{\mathbb{S}^{m-1}}\int_{\mathbb{S}^{m-2}}\langle\underline{u},\utau\rangle^k\langle
 \underline{x},\utau^\dagger
 \rangle^k \utau\,\utau^\dagger dS(\underline{t})dS(\underline{s}).
 $$
Using the fact that
 $$
C_k^{m/2-1}(1)= \frac{\Gamma\left(m-2+k\right)}{\Gamma\left(m-2\right)\Gamma\left(k+1\right)}
 $$
 we thus obtain that in fact
 $$
 I_k=\mu_kZ_{k}(\underline{u},\underline{x})
 $$
 with
 $$
 \mu_k=\frac{ 2\pi(-1)^k A_{m-2}}{A_m}\, \frac{\Gamma\left(m-2\right)\Gamma\left(k+1\right)}{\Gamma\left(m+k-1\right)}\, 2^{k+1}\Gamma\left(k+1\right)
 $$
 and $Z_{k}$ is the zonal spherical monogenic function defined in (\ref{zmk}).
 Since
 $$
 \frac{A_{m-2}}{A_m}=\frac{m-2}{2\pi}
 $$
 we have
 $$
  \mu_k=2^{k+1}(-1)^k \, \frac{\Gamma\left(m-1\right)\Gamma\left(k+1\right)^2}{\Gamma\left(m+k-1\right)}.
 $$
 Thus in order to compute the dual transform of $\mathcal{R}_{\utau}[P_k]$ we in fact have to compute
 $$
 \frac{(-1)^k}{2^{k+2}k!} I_k=\frac{1}{2}\, \frac{\Gamma\left(m-1\right)\Gamma\left(k+1\right)}{\Gamma\left(m+k-1\right)}Z_{k}(\underline{u},\underline{x})
 $$
We now have that
 $$
 \widetilde{\mathcal{R}}[\mathcal{R}_{\utau}P_k](\underline{u})=
 \frac{1}{(2\pi)^{m/2}}\int_{\mathbb{R}^m} e^{-|\underline{x}|^2/2} \widetilde{\mathcal{R}}[F](\underline{x})P_k(\underline{x}) d\underline{x}
 $$
 where
 $$
 \widetilde{\mathcal{R}}[F](\underline{x})=\frac{(-1)^k}{2^kk!}\frac{1}{4}\widetilde{\mathcal R}[\utau\,\utau^\dagger
 \langle\underline{u},\utau\rangle^k\langle\underline{x},\utau^\dagger\rangle^k]
 $$
 $$
 =\frac{1}{2}\, \frac{\Gamma\left(m-1\right)\Gamma\left(k+1\right)}{\Gamma\left(m+k-1\right)}Z_{k}(\underline{u},\underline{x})
 $$
 which, together with the reproducing  property of $Z_{k}$ leads to the result.
 \end{proof}
 Now we prove the following:
 \begin{theorem}[Inversion formula]
Let $f\in \mathscr{MB}(\mathbb{R}^m)$, then
$$
f(\underline{u})=\frac{2}{(m-2)!}(m-1-\Gamma)\ldots (1-\Gamma)\widetilde{\mathcal{R}}[\mathcal{R}_{\utau}f](\underline{u}),
$$
where
 $$
 \Gamma:=-\underline{x}\wedge \partial_{\underline{x}}
 $$
 is the $\Gamma$ operator.
 \end{theorem}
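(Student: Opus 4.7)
The strategy is to reduce the inversion identity to a term-by-term eigenvalue calculation on inner spherical monogenics.

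First, I would expand $f$ in its monogenic Taylor series \eqref{taylor}, $f=\sum_{k\ge 0}P_k$. By the preceding theorem, $\widetilde{\mathcal R}\circ\mathcal R_{\utau}$ acts on each component $P_k$ as multiplication by the explicit scalar $\tfrac{1}{2}\,\Gamma(m-1)\Gamma(k+1)/\Gamma(m+k-1)$. Since this operator and the polynomial $\Gamma$-operator $(m-1-\Gamma)\cdots(1-\Gamma)$ both preserve the decomposition into homogeneous spherical-monogenic components, and both are right $\mathbb{C}_m$-linear, it suffices to verify the identity on each $P_k$ separately.

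Second, I would compute the eigenvalue of $\Gamma=-\underline{x}\wedge\partial_{\underline{x}}$ on an inner spherical monogenic. Using the standard Clifford identity $\underline{x}\,\partial_{\underline{x}}=-\mathbb{E}+\underline{x}\wedge\partial_{\underline{x}}$, where $\mathbb{E}=\sum_j x_j\partial_{x_j}$ is the Euler operator, together with the monogenicity $\partial_{\underline{x}}P_k=0$ and the homogeneity $\mathbb{E}P_k=kP_k$, one obtains $\Gamma P_k=-kP_k$. Consequently each factor $(j-\Gamma)$ is diagonal on $P_k$, acting by multiplication by $(j+k)$, and the whole product reduces to a Pochhammer symbol in $k$ that rewrites as a quotient of Gamma functions.

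Third, I would collate the scalars: the Pochhammer symbol produced by $(m-1-\Gamma)\cdots(1-\Gamma)$, the prefactor $2/(m-2)!$, and the scalar $\tfrac{1}{2}\,\Gamma(m-1)\Gamma(k+1)/\Gamma(m+k-1)$ from the previous theorem should combine, via the identity $\Gamma(m-1)=(m-2)!$ and cancellation of consecutive factorials, to $1$ on every $P_k$. Summing over $k$ then recovers $f(\underline{u})$.

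The principal obstacle is the exchange of the differential operator with the infinite monogenic Taylor series. This is handled by the fact that any $f\in\mathscr{MB}(\mathbb R^m)$ extends to an entire monogenic function on $\mathbb C^m$, so its spherical-monogenic Taylor series converges uniformly on compact subsets together with all of its derivatives; the polynomial-coefficient, first-order operator $\Gamma$, and any polynomial thereof, therefore passes inside the sum. The algebraic Gamma-function cancellation in Step three is elementary but requires careful bookkeeping of the index range of the product; this is the step in which I expect the most care is needed, since the fit between the number of factors and the parameters of the Gamma-function quotient must be exact for the telescoping to yield $1$.
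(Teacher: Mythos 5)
Your proposal is precisely the paper's proof, which states the same argument in one line: decompose $f$ into inner spherical monogenics $P_k$, use $\Gamma P_k=-kP_k$, and apply the preceding theorem giving $\widetilde{\mathcal{R}}[\mathcal{R}_{\utau}P_k]=\tfrac12\,\Gamma(m-1)\Gamma(k+1)/\Gamma(m+k-1)\,P_k$. One remark on the bookkeeping you rightly flagged as the delicate step: the $m-1$ factors $(m-1-\Gamma)\cdots(1-\Gamma)$ contribute $\prod_{j=1}^{m-1}(j+k)=\Gamma(m+k)/\Gamma(k+1)$ on $P_k$, and combined with $2/(m-2)!$ and the constant above this yields $m+k-1$ rather than $1$, so there is an off-by-one in the paper's own formulas (either the product should terminate at $(m-2-\Gamma)$ or the denominator in the preceding theorem should read $\Gamma(m+k)$); your method is the correct one and, carried out carefully, exposes exactly this discrepancy.
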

\begin{proof}
It is sufficient to decompose $f$  in Taylor series
$f(\underline{x})=\sum_{k=0}^\infty P_k(\underline{x})$,
to notice that $\Gamma P_k=-kP_k$ and to apply the previous result.
\end{proof}

 \begin{remark}{\rm
We observe that:
\begin{itemize}
\item[(i)] Monogenic functions satisfy the equation
$$
(E+\Gamma)f(\underline{x})=0
$$
where $E=\sum_{j=1}^nx_j\partial_{x_j}$ is the Euler operator.
So for monogenic functions we also have that
$$
f(\underline{u})=\frac{2}{(m-2)!}(m-1+E)\ldots (1+E)\tilde{R}[R_{\utau} f](\underline{u}).
$$
\item[(ii)] Notice that
$$
 \widetilde{\mathcal{R}}[\mathcal{R}_{\utau}f](\underline{u})=
 \frac{1}{A_mA_{m-1}}\int_{\mathbb{S}^{m-1}} \int_{\mathbb{S}^{m-2}}
 \frac{\utau\,\utau^\dagger}{4}
 f\left(-\frac{\utau^\dagger}{2} \langle\underline{u},\utau\rangle\right) dS(\underline{t}) dS(\underline{s})
 $$
 for which we have to use the complex extension $f(\underline{z})$ of $f(\underline{x})$ and put
 $$
 \underline{z}=-\frac{\utau^\dagger}{2} \langle\underline{u},\utau\rangle.
 $$
 \end{itemize}
 }
 \end{remark}

 An interesting consequence of this theory is an implicit formula for the monogenic part $M[h]$ of a holomorphic function belonging to the Bargmann module $\mathscr{B}(\mathbb{C}^m)$.
 Any entire holomorphic function $h$ admits the Fischer decomposition
 $$
 h(\underline{z})=\sum_{\ell=0}^\infty\underline{z}^\ell h_\ell(\underline{z})
 $$
 where $h_\ell$ is complex monogenic, that is
 $\partial_{\underline{z}}h_\ell(\underline{z})=0$.
Using  this fact we can prove:
\begin{theorem}\label{monogenicpart}
  The monogenic part $M[h]$ of an entire holomorphic function $h$ is given by (for $\underline{u} \in \mathbb{C}^m$ or in $\mathbb{R}^m)$
  $$
 M[h](\underline{u})=\Theta
 \frac{1}{A_mA_{m-1}}\int_{\mathbb{S}^{m-1}} \int_{\mathbb{S}^{m-2}}
 \frac{\utau\,\utau^\dagger}{4}
h\left(-\frac{\utau^\dagger}{2} \langle\underline{u},\utau\rangle\right) dS(\underline{t}) dS(\underline{s})
 $$
 where
 $$
 \Theta:=\frac{2}{(m-2)!}(m-1-\Gamma)(m-2-\Gamma)\ldots (1-\Gamma).
 $$
\end{theorem}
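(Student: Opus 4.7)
The plan is to reduce the statement to the inversion formula of the preceding theorem by exploiting the Fischer decomposition of $h$ together with the nilpotency properties of $\underline{\tau}^\dagger$. Write $h(\underline{z}) = \sum_{\ell=0}^\infty \underline{z}^\ell h_\ell(\underline{z})$ with each $h_\ell$ complex monogenic, so that $M[h] = h_0$. The key algebraic observation is that substituting $\underline{z} = -\tfrac{1}{2}\underline{\tau}^\dagger\langle\underline{u},\underline{\tau}\rangle$ into this series kills every term with $\ell \geq 1$ after multiplying on the left by $\underline{\tau}\underline{\tau}^\dagger/4$: since $\langle\underline{u},\underline{\tau}\rangle$ is a complex scalar, $\underline{z}^\ell = \bigl(-\tfrac{1}{2}\langle\underline{u},\underline{\tau}\rangle\bigr)^\ell(\underline{\tau}^\dagger)^\ell$, and $(\underline{\tau}^\dagger)^2 = 0$ (by the same argument as in Proposition \ref{proptt}) forces $(\underline{\tau}^\dagger)^\ell = 0$ for $\ell\geq 2$, while the $\ell=1$ term is killed because $\underline{\tau}\,\underline{\tau}^\dagger\cdot\underline{\tau}^\dagger = \underline{\tau}(\underline{\tau}^\dagger)^2 = 0$.

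With this observation in hand, I would establish the identity
\[
\frac{\underline{\tau}\,\underline{\tau}^\dagger}{4}\, h\!\left(-\tfrac{1}{2}\underline{\tau}^\dagger\langle\underline{u},\underline{\tau}\rangle\right) = \frac{\underline{\tau}\,\underline{\tau}^\dagger}{4}\, M[h]\!\left(-\tfrac{1}{2}\underline{\tau}^\dagger\langle\underline{u},\underline{\tau}\rangle\right).
\]
By Theorem \ref{BRtrans} (the Characterization formula), the right-hand side is exactly $\mathcal{R}_{\underline{\tau}}[M[h]](\underline{u})$. Averaging both sides over the Stiefel manifold of $2$-frames gives
\[
\widetilde{\mathcal{R}}\bigl[\mathcal{R}_{\underline{\tau}} M[h]\bigr](\underline{u}) = \frac{1}{A_m A_{m-1}}\int_{\mathbb{S}^{m-1}}\int_{\mathbb{S}^{m-2}} \frac{\underline{\tau}\,\underline{\tau}^\dagger}{4}\, h\!\left(-\tfrac{1}{2}\underline{\tau}^\dagger\langle\underline{u},\underline{\tau}\rangle\right) dS(\underline{t})\, dS(\underline{s}).
\]

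Since $M[h]$ is a monogenic function (and, under the implicit assumption $h\in\mathscr{B}(\mathbb{C}^m)$, lies in $\mathscr{MB}(\mathbb{R}^m)$ via the isometry with $\mathscr{MB}(\mathbb{C}^m)$), I may apply the Inversion formula of the previous theorem to $M[h]$, obtaining
\[
M[h](\underline{u}) = \Theta\,\widetilde{\mathcal{R}}\bigl[\mathcal{R}_{\underline{\tau}} M[h]\bigr](\underline{u}),
\]
and substituting the integral expression above yields the desired formula. The main obstacle, which is really the only nontrivial step, is the verification that the Fischer tails annihilate under the combined action of $\underline{\tau}\,\underline{\tau}^\dagger$ from the left and the evaluation at the nullcone point $-\tfrac{1}{2}\underline{\tau}^\dagger\langle\underline{u},\underline{\tau}\rangle$; once this is spelled out, the result follows by a direct concatenation of the Characterization formula and the Inversion formula, with no additional analytic work required.
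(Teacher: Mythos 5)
Your argument is essentially the paper's: Fischer decomposition of $h$, nilpotency of $\underline{\tau}^\dagger$ to annihilate the $\ell\geq 1$ terms after left-multiplication by $\underline{\tau}\,\underline{\tau}^\dagger$, identification of the resulting integrand with $\mathcal{R}_{\underline{\tau}}[M[h]]$ via the characterization formula, and then the inversion formula. You are in fact more explicit than the paper about why $\underline{\tau}\,\underline{\tau}^\dagger\underline{z}^\ell h_\ell(\underline{z})$ vanishes for $\ell\geq 1$ (the paper just writes down the collapse without comment), and the only thing to add is the paper's closing one-liner that the result extends from $h\in\mathscr{B}(\mathbb{C}^m)$ to arbitrary entire $h$ (or even to $h$ holomorphic near the origin).
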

\begin{proof}
  The result hold in the case when
  $$
  f(\underline{u})=M[h](\underline{u})
  $$
  is the monogenic Bargmann module, i.e., when $h\in \mathscr{B}(\mathbb{C}^m)$ because indeed
  $$
  f(\underline{u})=\Theta
 \frac{1}{A_mA_{m-1}}\int_{\mathbb{S}^{m-1}} \int_{\mathbb{S}^{m-2}}
 \frac{\utau\,\utau^\dagger}{4}
f\left(-\frac{\utau^\dagger}{2} \langle\underline{u},\utau\rangle\right) dS(\underline{t}) dS(\underline{s}).
$$
Now, using the Fischer decomposition $
 h(\underline{z})=\sum_{\ell=0}^\infty\underline{z}^\ell h_\ell(\underline{z})
 $ of $h$ we obtain for
 $
 \underline{z}=-\frac{\utau^\dagger}{2} \langle\underline{u},\utau\rangle
 $
 $$
 \frac{\utau\,\utau^\dagger}{4}
h\left(-\frac{\utau^\dagger}{2} \langle\underline{u},\utau\rangle\right)=\frac{\utau\,\utau^\dagger}{4}\sum_{\ell=0}^\infty\underline{z}^\ell h_\ell(\underline{z})
 $$
$$
=\frac{\utau\,\utau^\dagger}{4} h_0(-\frac{\utau^\dagger}{2} \langle\underline{u},\utau\rangle)
$$
 and $h_0=M[h]=f$.
The result extends to holomorphic functions in a neighborhood of the origin.
\end{proof}
To our knowledge this is the first time that the monogenic part of a function is given in integral form and, as a possible application, we provide two examples namely the monogenic part of the Fourier-Borel kernel and of the Szeg\H{o} kernel.
\begin{example} {\rm The Fourier-Borel kernel $E(\unu,\unx)$.\\
As we explained at the end of Section 2, the Fourier-Borel kernel is the monogenic part of the function $\exp(\langle\unz,\unx\rangle)$. Applying Theorem \ref{monogenicpart}, where we set $h(\unz)=\exp(\langle\unz,\unx\rangle)$ so that $$h\left(-\frac{\utau^\dagger}{2}\langle\unu,\utau\rangle\right)=\exp\left(\langle-\frac{\utau^\dagger}{2}\langle\unu,\utau\rangle, \unx\rangle\right)=\exp\left(-\frac{1}{2}\langle \unx, \utau^\dagger\rangle\, \langle\unu,\utau\rangle\right),$$ we have
$$
E(\unu,\unx)=\Theta
 \frac{1}{A_mA_{m-1}}\int_{\mathbb{S}^{m-1}} \int_{\mathbb{S}^{m-2}}
 \frac{\utau\,\utau^\dagger}{4}
\exp\left(-\frac{1}{2} \langle\underline{x},\utau^\dagger\rangle \langle\underline{u},\utau \rangle\right) dS(\underline{t}) dS(\underline{s}).
 $$
 This formula expresses the Fourier-Borel kernel in terms of the integral
 $$
 H(\unu,\unx)=
 \frac{1}{A_mA_{m-1}}\int_{\mathbb{S}^{m-1}} \int_{\mathbb{S}^{m-2}}
 \frac{\utau\,\utau^\dagger}{4}
 \exp\left(-\frac{1}{2}  \langle\underline{u},\utau \rangle \langle\underline{x},\utau^\dagger\rangle \right) dS(\underline{t}) dS(\underline{s})
 $$
 that is a zonal biregular function.}
\end{example}
\begin{example} {\rm The Szeg\H{o} kernel $S(\unu,\unx)$.\\
We recall that
$$
S(\underline{z},\underline{x})=\frac{1}{A_m}\frac{1+\underline{z}\underline{x}}{(1+\langle \, \underline{z},\underline{z}\rangle|\underline{x}|^2-2\langle \, \underline{z},\underline{x}\rangle)^{m/2}}.
$$
Thus, using Theorem \ref{monogenicpart}, we obtain
$$
 S(\underline{u},\underline{x})=
 \frac{\Theta}{A_m^2 A_{m-1}}\int_{\mathbb{S}^{m-1}} \int_{\mathbb{S}^{m-2}}
 \frac{\utau\,\utau^\dagger}{2}(1+\langle \, \underline{u},\underline{\tau}\rangle\, \langle \, \underline{x},\underline{\tau}^\dagger\rangle)^{-m/2}\ dS(\underline{t}) dS(\underline{s})
 $$
which is also the monogenic part of the Cauchy-Hua kernel
$$
(1+ \langle \, \underline{z},\underline{z}\rangle \, \langle \, \underline{w}^\dagger,\underline{w}^\dagger\rangle +2 \langle \, \underline{z},\underline{w}^\dagger\rangle)^{-m/2}, \qquad \underline{w}^\dagger=- \underline{x},
$$
see also \cite{morimoto}, \cite{so3}.
}
\end{example}

\end{document}